\newtheorem{theorem}{Theorem}[section]
\newtheorem{proposition}[theorem]{Proposition}
\newtheorem{define}[theorem]{Definition}
\def\Empty{}
\def\section{\@startsection {section}{1}{\z@}{-3.5ex plus -1ex minus 
-.2ex}{2.3ex plus .2ex}{\large\bf}}
\def\fnum@figure{{\small Figure \thefigure}}
\def\fakefigure{\def\@captype{figure}}
\long\def\@makecaption#1#2{
    \vskip 10pt 
    \def\FCap{#2} \def\NoCap{\ignorespaces}
    \ifx \FCap\NoCap
       \setbox\@tempboxa\hbox{#1}  % This is to avoid the damn colon.
      \else
       \setbox\@tempboxa\hbox{#1: \small \it #2}
    \fi
    \ifdim \wd\@tempboxa >\hsize   % IF longer than one line:
        \unhbox\@tempboxa\par      %   THEN set as ordinary paragraph.
      \else                        %   ELSE  center.
        \hbox to\hsize{\hfil\box\@tempboxa\hfil}  
    \fi}
\def\@oddhead{\hbox{}\rightmark \hfil \rm\thepage}% Right heading.
\def\sectionmark#1{\markright {\sc{\ifnum \c@secnumdepth >\z@
      \S\thesection.\hskip 1em\relax \fi #1}}}
\def\oplabel#1{
  \def\OpArg{#1} \ifx \OpArg\Empty {} \else
  	\label{#1}
  \fi}
\newlength{\saveu}
\def\centeredepsfbox#1{\centerline{\epsfbox{#1}}}
\begin{document}

\title{Diversified homotopic behavior of closed orbits of some
$\rrrr$-covered Anosov flows}
\author{S\'{e}rgio R. Fenley
}
\maketitle

\vskip .2in

{\small{
\noindent
{\bf {Abstract}} $-$ 
We produce infinitely many examples of Anosov flows in closed $3$-manifolds
where the set of periodic orbits is partitioned into two infinite subsets.
In one subset every closed orbit is freely homotopic to infinitely 
other closed orbits of the flow. In the other subset every closed orbit
is freely homotopic to only one other closed orbit. 
The examples are obtained by Dehn surgery on geodesic flows. The manifolds
are toroidal and have Seifert  pieces and atoroidal pieces in their
torus decompositions.
}}

\vskip .2in
\section{Introduction}

This article deals with the question of free homotopies of closed orbits
of Anosov flows \cite{An} in $3$-manifolds. In particular we deal with the 
following question: how many closed orbits are freely homotopic to
a given closed orbit of the flow?
Suspension Anosov flows have the property that an arbitrary
closed orbit is not freely homotopic to any other closed orbit.
Geodesic flows have the property that every closed orbit (which 
corresponds to a geodesic in the surface) is only freely
homotopic to one other closed orbit. The other orbit corresponds
to the same geodesic in the surface, but being traversed in the
opposite direction. About twenty years ago the author proved that 
there is an infinite class of Anosov flows in closed hyperbolic
$3$-manifolds satisfying the property that every closed orbit
is freely homotopic to infinitely many other closed orbits
\cite{Fe1}.
Obviously this was diametrically opposite to the behavior of the 
previous two examples and it was also quite unexpected. 
This property in these examples is strongly connected with  the
large scale properties of Anosov flows when lifted to the universal cover.
In particular in these examples the property of infinitely orbits which
are  freely  homotopic to each other 
implies that the flows are not quasigeodesic, that is, orbits are not uniformly
efficient in measuring length in the universal cover \cite{Fe1}. This
provided the first examples of Anosov flows in hyperbolic $3$-manifolds
which are not quasigeodesic. 
The analysis of freely homotopic closed orbits of Anosov flows is also extremely
important in other situations, for example: \ 1) Analysing the interaction between incompressible
tori in the manifold and the Anosov flow \cite{Barb-Fe}, \ 2) Studying the structure of the Anosov flow when 
``restricted" to a Seifert piece of the torus decomposition of the manifold \cite{Barb-Fe}.

We define the {\em free homotopy class} of a closed orbit of a flow
to be the collection of closed orbits which are freely homotopic to the
original closed orbit. For an Anosov flow each free homotopy class
is at most infinite countable  as there are only countably many
closed orbits of the flow \cite{An}. In this article we are concerned with 
the cardinality of free homotopy classes. Suspensions have all free homotopy
classes with cardinality one and geodesic flows have all free homotopy
classes with cardinality two. In the hyperbolic examples mentioned above
every free homotopy class has infinite cardinality.
In addition if we do finite covers of geodesic flows, where we ``unroll the
fiber direction",  then we can get examples satisfying the property that every free homotopy class has
cardinality $2n$ where $n$ is a positive integer. 
The question we ask is whether we can have mixed behavior for an Anosov flow.
In other words,  can some free homotopy classes be infinite while others
have finite cardinality?
In this article we produce infinitely many examples where this indeed occurs.

%\vskip .1in
\newpage
\noindent
{\bf {Main theorem}} $-$  There are infinitely many examples of Anosov flows
$\Phi$ in closed $3$-manifolds so that the set of closed orbits is partitioned
in two infinite subsets $A$ and $B$ so that the following happens.
Every closed orbit in $A$ has infinite free homotopy class.
Every closed orbit in $B$ has free homotopy class of cardinality two.
The examples are obtained by Dehn surgery on closed orbit of geodesic flows.
\vskip .1in

We thank the reviewer, whose suggestions greatly improved the presentation of this
article.

\section{Previous results and definitions}
\label{previous}

A three manifold $M$ is {\em irreducible} if every sphere bounds a ball \cite{He}. An 
{\em incompressible } torus is the image of an  embedding
$f: T^2 \rightarrow M$ which does not have compressing disks 
\cite{He}.  A $3$-manifold is {\em homotopically atoroidal} if
every $\pi_1$-injective map $f: T^2 \rightarrow M$ is homotopic into the 
boundary. The manifold is {\em geometrically atoroidal} if every incompressible
torus is homotopic to the boundary.
A $3$-manifold $M$ is {\em Seifert fibered} if it has a $1$-dimensional foliation
by circles \cite{He,Ep}. The {\em torus decomposition} states that every
compact, irreducible $3$-manifold $M$ can be decomposed by finitely
incompressible tori $T_1, ..., T_k$ so that the closure of every component
of $(M - \cup T_k)$ is either Seifert fibered or atoroidal 
\cite{Ja,Ja-Sh}.   A {\em graph manifold}
is an irreducible $3$-manifold whose pieces of the torus decomposition are all
Seifert fibered. 
The base space of a Seifert fibered space is the quotient of $M$ by the Seifert
fibration. It is a $2$-dimensional orbifold with finitely many cone points.
The Seifert space is called {\em small} if the base is either the disk with less
than 3 cone points or the sphere with less than 4 cone points.

If $M$ closed admits an Anosov flow then $M$ is irreducible \cite{Ro}. But $M$ may
be toroidal, for example this happens in the  case of geodesic flows or suspensions. In the case of
geodesic flows the whole manifold is Seifert fibered.

\subsection{Anosov flows and $\rrrr$-covered Anosov flows}

Let $\Phi$ be an Anosov flow in $M^3$ and let $\ls, \lu$ be its stable and unstable
foliations respectively. The leaves of $\ls, \lu$ can only be planes, annuli and
M\"{o}bius bands \cite{An}.   A leaf $L$ of $\ls$ or $\lu$ is an
 annulus or M\"{o}bius band if and only $L$ contains a closed orbit of $\Phi$ \cite{An}.
Let $\wls, \wlu$ be the lifted foliations to the universal cover $\mi$.
Let $F$ be a leaf of $\wls$ or $\wlu$. By the above, the leaf $F$ has non trivial
stabilizer if and only if $\pi(F)$ has a closed orbit of $\Phi$. Here the map $\pi: \mi \rightarrow M$
is the universal covering map. The stabilizer of $F$ is $\{ g \in \pi_1(M) \ | \ g(F) = F \}$.

\begin{theorem}{}{(\cite{Fe2,Fe3})}
Suppose that $\Phi$ is an Anosov flow in $M^3$ and suppose that $\alpha$ and $\beta$
are closed orbits of $\Phi$ which are freely homotopic to each other, as oriented curves.
Then there is $\gamma$ periodic orbit of $\Phi$ so that $\alpha$ is freely homotopic
to $\gamma^{-1}$ as oriented curves.
\label{inverse}
\end{theorem}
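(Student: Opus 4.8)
\smallskip
\noindent\textit{Sketch of proof.} The plan is to work in the universal cover and its orbit space and to exploit the structure of periodic orbits that share a stabilizing deck transformation. Since $\alpha$ and $\beta$ are freely homotopic as oriented curves, I would first pick periodic orbits $\wa,\wb$ of the lifted flow $\wwp$ in $\mi$ which are both invariant under a single $1\neq g\in\pi_1(M)$ representing the oriented free homotopy class of $\alpha$; after replacing $\wa$ by a deck translate we may take $g$ to be the generator of $\mathrm{Stab}(\wa)$ and of $\mathrm{Stab}(\wb)$ that moves points forward along the flow (``$g$ is positive on $\wa$ and on $\wb$''). We may assume $\wa\neq\wb$, since otherwise $\alpha$ and $\beta$ share a lift and the free homotopy is inessential. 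As $g\neq 1$, it acts freely, hence by a translation, on any flow line it preserves.

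Next I would invoke the structure theorem of \cite{Fe2,Fe3} for periodic orbits fixed by a common deck transformation: the set of orbits of $\wwp$ fixed by $g$ is exactly the set of corners of a $g$-invariant chain of lozenges, consecutive lozenges sharing a corner. Since $\wa\neq\wb$ are two such orbits, this chain is non-empty, so $\wa$ is a corner of some lozenge $\mathcal{L}$ in the chain; let $\widetilde{\gamma}$ be its other corner. Then $g(\widetilde{\gamma})=\widetilde{\gamma}$, so $\gamma:=\pi(\widetilde{\gamma})$ is a closed orbit of $\Phi$ (consistently with the correspondence recalled above, since $g$ also fixes the leaf of $\wls$ through $\widetilde{\gamma}$, which therefore has non-trivial stabilizer).

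The crux, and the step I expect to be the main obstacle, is the orientation computation inside the single lozenge $\mathcal{L}$: I claim $g$ is positive on one of its two corners if and only if it is negative on the other. Granting this, $g$ is negative on $\widetilde{\gamma}$, so $g^{-1}$ is the positive generator of $\mathrm{Stab}(\widetilde{\gamma})$, whence $[\gamma]=[g]^{-1}=[\alpha]^{-1}$ in $\pi_1(M)$; equivalently $[\gamma^{-1}]=[\alpha]$, which is exactly the assertion that $\alpha$ is freely homotopic to $\gamma^{-1}$ as oriented curves. To prove the claim I would pass to the orbit space $\oo$ of $\wwp$ with its two transverse one-dimensional foliations $\oos,\oou$ induced by $\wls,\wlu$. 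There $g$ fixes the two corners $P=\wa$ and $Q=\widetilde{\gamma}$, preserves the lozenge region $R$ they span, and preserves $\oos$ and $\oou$ separately, hence fixes each of the four sides of $R$ and its two ideal corners. The Anosov condition --- equivalently, hyperbolicity of the Poincar\'e return map of a periodic orbit, which acts on $\oo$ as the \emph{inverse} of the associated deck transformation --- shows that $P$ is a fixed point of $g$ on $\oo$ with a local product structure, say with $\oos(P)$ repelling and $\oou(P)$ attracting since $g$ is positive on $P$; likewise $Q$ carries such a structure, the roles of $\oos(Q)$ and $\oou(Q)$ depending only on whether $g$ is negative or positive on $Q$.

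It remains to decide which. Following the forward $g$-orbit of an interior point of $R$: since $\oos(P)$ is the repelling direction of $g$ at $P$, the map $g$ sweeps $R$ toward the ideal corner $e$ of $R$ lying at the far end of the side of $R$ contained in $\oos(P)$. But in a lozenge the sides are attached in the ``twisted'' way: the side of $R$ along a stable leaf of $P$ is asymptotic to --- shares its ideal corner with --- the side of $R$ along an unstable leaf of $Q$, and symmetrically with $P$ and $Q$ interchanged. Hence $e$ is also the ideal corner at the far end of the side of $R$ contained in $\oou(Q)$, so $\oou(Q)$ is the repelling direction of $g$ at $Q$, which forces $g$ to be negative on $Q$ and proves the claim. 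The delicate point is the combinatorial bookkeeping of which sides of the lozenge lie along stable versus unstable leaves and which ideal corner each asymptotic pair shares; once that is settled the conclusion is forced. As a consistency check, for a geodesic flow the unique lozenge attached to a closed orbit has as its corners the underlying closed geodesic traversed in the two opposite directions, which recovers the behavior recalled in the introduction.
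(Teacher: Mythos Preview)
The paper does not prove this theorem; it is quoted from \cite{Fe2,Fe3} without argument. Your sketch is essentially the proof given in those references: lift the two freely homotopic orbits to orbits of $\wwp$ sharing a stabilizer $g$, invoke the lozenge structure theorem to find a lozenge with $\wa$ as one corner, and then check that $g$ acts with opposite flow-orientation on the two corners of a single lozenge. Your dynamical argument for the last step (tracking which sides of the lozenge are attracting/repelling for $g$ and using that a stable side through $P$ shares its ideal corner with an unstable side through $Q$) is correct and is exactly the mechanism behind the result. One small remark: your parenthetical that the Poincar\'e return map acts on $\oo$ as the inverse of the deck transformation is right and is what makes $\oos(P)$ repelling for $g$ when $g$ is positive on $P$; it may be worth stating this as a standalone observation rather than an aside, since it is the hinge of the orientation computation.
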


For an arbitrary Anosov flow  $\Phi$ the orbit
space $\oo$ of the lifted flow  $\wwp$ to the universal cover
 is homeomorphic to the plane $\rrrr^2$
\cite{Fe1}.
The lifted foliations $\wls, \wlu$ are invariant by the flow $\wwp$ so they induce 
one dimensional foliations
$\oos, \oou$ in $\oo$.

\begin{define}{}
A foliation $\fol$ in $M$  is $\rrrr$-covered if the leaf space of the lifted
foliation $\fn$ to the universal cover $\mi$ is homeomorphic to the real numbers $\rrrr$
\cite{Fe1}.
\end{define}

\begin{define}{} 
An Anosov flow is $\rrrr$-covered if its stable foliation
(or equivalently its unstable foliation \cite{Ba1,Fe1}) is $\rrrr$-covered.
\end{define}

Examples of $\rrrr$-covered Anosov flows are suspensions and geodesic flows \cite{Fe1}.
In \cite{Fe1} the author showed that there is an infinite class of $\rrrr$-covered Anosov flows 
on hyperbolic $3$-manifolds. Barbot \cite{Ba2} proved that the Handel-Thurston 
Anosov flows \cite{Ha-Th} are $\rrrr$-covered, as well as an infinite class of Anosov flows
in graph manifolds.
The Anosov flows in the Main theorem are $\rrrr$-covered. 
On the other hand the class of non $\rrrr$-covered Anosov flows is extremely large.
For example Barbot \cite{Ba1} proved that $\rrrr$-covered Anosov flows are transitive.
Hence the intransitive Anosov flows constructed by Franks and Williams 
\cite{Fr-Wi} are not $\rrrr$-covered.
In addition the Bonatti and Langevin examples \cite{Bo-La} are transitive Anosov flows
which are not covered. Barbot \cite{Ba2} constructed many other examples of transitive
Anosov flows in graph manifolds and these examples were greatly generalized in
\cite{Barb-Fe}. In all of these examples the underlying manifold is toroidal and
consequently not hyperbolic.

\subsection
{\bf {$\rrrr$-covered Anosov flows}}

Suppose that $\Phi$ is an $\rrrr$-covered Anosov flow.
Then  there are two possibilities 
for the topological structure of the lifted stable and unstable foliations $\wls, \wlu$
to $\mi$ \cite{Ba1,Fe1} or equivalently for the topological structure of the foliations
$\oos, \oou$ in $\oo \cong \rrrr^2$.

\begin{itemize} 

\item
Suppose that every leaf of $\wls$ intersects every leaf of $\wlu$.
In this case $\Phi$ is said to have  the {\em product} type.
Then Barbot \cite{Ba1} showed that $\Phi$ is topologically equivalent 
to a suspension Anosov flow. This implies that $M$ fibers over the
circle with fiber a torus. 

\item
The other possibility is that $\Phi$ is  a {\em skewed}
$\rrrr$-covered Anosov flow. This means that $\oo$ has a model homeomorphic
to an infinite strip $(0,1) \times \rrrr$. 
In addition the model  satisfies the following properties.
The stable foliation $\oos$ in $\oo$ 
is a foliation by horizontal segments in $(0,1) \times \rrrr$. The unstable foliation
is a foliation by parallel segments in $(0,1) \times \rrrr$ which make and angle
$\theta$ which is not $\pi/2$ with the horizontal. That is, they are not
vertical and hence an unstable leaf does not intersect every stable
leaf and vice versa. 
We refer to figure \ref{skew}.
Notice that every unstable leaf $u$ of $\oou$ intersects an interval $J_u$ of
stable leaves of $\oos$. This is a strict subset of the leaf space of $\oos$.
The model implies that different leaves $u$  of $\oou$ generate different intervals $J_u$.
\end{itemize}

In \cite{Fe3} the author proved that if $\Phi$ is a skewed $\rrrr$-covered Anosov flow
then the underlying manifold is orientable. With this one can also produce infinitely
many examples of transitive non $\rrrr$-covered Anosov flows where the underlying
manifold is hyperbolic.

\begin{proposition}{}{Proposition 3.1 of \cite{Ba-Fe}}
Let $\gamma$ be a closed orbit of an Anosov flow $\Phi$. Then any lift $\widetilde \gamma$
of $\gamma$ 
to $\mi$ is unknotted. In particular $\pi_1(\widetilde M - \widetilde \gamma)$ is
${\bf Z}$ and $\widetilde M - \widetilde \gamma$ is an open solid torus.
\label{unknot}
\end{proposition}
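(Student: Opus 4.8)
\medskip
\noindent\textbf{Proof plan.}
My approach is to realize $\widetilde{\gamma}$ as a single fibre of the orbit map $q\colon \mi \to \oo$ of the lifted flow $\wwp$, to note that $q$ is a trivial $\rrrr$-bundle, and then to read off everything from the resulting product structure. First, since $\gamma$ is $\Phi$-invariant its full preimage $\pi^{-1}(\gamma)$ is a properly embedded $1$-submanifold of $\mi$ invariant under $\wwp$, so each of its connected components is a single orbit of $\wwp$; a lift $\widetilde{\gamma}$ of $\gamma$ is, by definition, one such component. Thus $\widetilde{\gamma} = q^{-1}(p)$, where $p \in \oo$ is the point represented by $\gamma$.

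The main step is that $q\colon \mi \to \oo$ is a topologically trivial $\rrrr$-bundle. The flow $\wwp$ is nonsingular (being Anosov), and the structure theory for the lift of an Anosov flow to the universal cover gives that $\oo$ is Hausdorff --- indeed $\oo \cong \rrrr^2$, as recalled above --- and that the associated $\rrrr$-action on $\mi$ is proper. A proper $\rrrr$-action is automatically free (the only compact subgroup of $\rrrr$ is trivial), so $q$ is a principal $\rrrr$-bundle over $\oo$; since $\rrrr$ is contractible --- equivalently, since the base $\oo \cong \rrrr^2$ is contractible and paracompact --- this bundle is trivial. Hence there is a homeomorphism $\mi \cong \oo \times \rrrr$ taking the orbits of $\wwp$ to the slices $\{\,\cdot\,\} \times \rrrr$; in particular $\mi \cong \rrrr^3$ and $\widetilde{\gamma}$ is carried to $\{p\} \times \rrrr$, a properly embedded line.

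With this product structure the remaining assertions are immediate. Composing $\mi \cong \oo \times \rrrr$ with a homeomorphism $\oo \cong \rrrr^2$ sending $p$ to the origin, one obtains a homeomorphism $\mi \to \rrrr^3$ carrying $\widetilde{\gamma}$ onto the straight line $\{0\} \times \rrrr$, so $\widetilde{\gamma}$ is unknotted. Likewise $\mi - \widetilde{\gamma} \cong (\oo - \{p\}) \times \rrrr \cong (\rrrr^2 - \{0\}) \times \rrrr \cong S^1 \times \rrrr^2$, which is an open solid torus, and its fundamental group is ${\bf Z}$. The one point needing real work is the middle paragraph --- that $\oo$ is Hausdorff and that the $\rrrr$-action on $\mi$ is proper; this is precisely the content of the structure results for lifted Anosov flows of Barbot and Fenley cited above, and given it the rest is routine $3$-manifold topology.
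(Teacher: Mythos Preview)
The paper does not supply its own proof of this proposition; it is quoted verbatim as Proposition~3.1 of \cite{Ba-Fe} and used as a black box. So there is no in-paper argument to compare against.

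Your argument is correct and is essentially the standard one. The only step that carries real content is the product decomposition $\mi \cong \oo \times \rrrr$ carrying $\wwp$-orbits to vertical lines, and this is exactly what the paper already grants you when it records (citing \cite{Fe1}) that the orbit space $\oo$ of $\wwp$ is homeomorphic to $\rrrr^2$: in the course of proving that, one shows the quotient map $\mi \to \oo$ admits local sections, hence is an $\rrrr$-bundle over a contractible base, hence trivial. Your deduction of unknottedness and of $\mi - \widetilde\gamma \cong (\rrrr^2 - \{0\}) \times \rrrr \cong S^1 \times \rrrr^2$ from this product structure is then immediate and clean. One small remark: you need not invoke properness separately to get freeness of the $\rrrr$-action on $\mi$; it is enough to observe that closed orbits of an Anosov flow are homotopically nontrivial, so $\wwp$ has no periodic orbits in the simply connected cover. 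With that in hand, Hausdorffness of $\oo$ (already part of the cited structure theory) gives the bundle structure directly.
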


\subsection{Dehn surgery and Fried's flow Dehn surgery}

{\bf Dehn filling} -
Let $N$ be a $3$-manifold which is the interior of a compact $3$-manifold
$\hat{N}$ so that the boundary of $\hat{N}$ is a union of 
tori $P_1, ..., P_k$. Choose simple closed curve generators
$a_i, b_i$ for each $\pi_1(P_i)$. Let $N_{(x_1,y_1),...,(x_k,y_k)}$
be the manifold obtained by {\em Dehn filling} $N$ (or more specifically $\hat{N}$)
with $k$ solid tori $V_i$ as follows: for each $i$ glue the boundary of
the solid torus $V_i$ to $P_i$ by a homeomorphism so that the meridian
in $V_i$ is glued to the curve $x_i a_i + y_i b_i$ in $P_i$. We require
that the pair of integers $x_i, y_i$ are relatively prime to ensure that $x_i a_i + y_i b_i$
is a simple closed curve in $P_i$. The topological type of the Dehn filled manifold
is completely determined by the collection of pairs of integers $(x_i,y_i)$.
They are called the Dehn  surgery coefficients.

\vskip .1in
\noindent
{\bf Dehn surgery} -
Let $M$ be a  closed $3$-manifold and $\gamma$  an orientation preserving simple 
closed curve in $M$. Let $N(\gamma)$ be a solid torus neighborhood
of $\gamma$ and $M' = M - \mathring{N}(\gamma)$. Choose a pair
of generators for $\partial N(\gamma) \subset \partial M'$. Then {\em Dehn surgery}
on $\gamma$ with coefficients $x,y$ is the Dehn filled manifold
$M'_{(x,y)}$. 

\vskip .1in
\noindent
{\bf Remarks}  \ 1) More generally one can do Dehn surgery on $\gamma$ if $M$ has
boundary, or is not compact. \ 2) Dehn surgery is very general: any closed orientable $3$-manifold
can be obtained from the $3$-sphere by iterated Dehn surgery \cite{Li}.

\vskip .1in
\noindent
{\bf Hyperbolic Dehn surgery \cite{Th1,Th2,Be-Pe}} - Let $M$ be a complete hyperbolic manifold
with finite volume and not compact. Then $M$ is homeomorphic to the interior of a compact
manifold $\hat{M}$ with boundary components $P_1, ..., P_k$ which we assume are tori. 
Each such torus corresponds to a cusp in $M$.
As above one can do Dehn filling to obtain the closed manifold $M_{(x_1,y_1),..., (x_k,y_k)}$.
Thurston proved except for finitely many choices of the integers $x_i, y_i$ the manifold
$M_{(x_1,y_1),..., (x_k,y_k)}$ admits a hyperbolic structure. If one fills only one of the 
cusps, or a subset of the cusps,
 then for big enough surgery coefficients, 
 the resulting manifold is hyperbolic but not closed, it still has some cusps.
The reference \cite{Th1} has a very extensive and detailed proof of this in the case of the figure
eight knot complement in the sphere ${\bf S}^3$.  The book \cite{Be-Pe} has a proof
in the general case.

\vskip .1in
\noindent
{\bf Fried's flow Dehn surgery \cite{Fr}} -
Suppose that $\Phi$ is an Anosov flow and that $\gamma$ is a closed orbit which is 
orientation preserving in $M$, and so that the stable leaf $\ls(\gamma)$  of $\gamma$ is an annulus
(as opposed to a M\"{o}bius band). Let $N(\gamma)$ be a solid torus neighborhood of $\gamma$.
Choose generators for $\pi_1(P)$ where  $P = \partial (N(\gamma))$ as follows. Let $a$ be a meridian
in $N(\gamma)$ $-$ unique up to inverse in $\pi_1(P)$. Let $b$ be the intersection of the local stable
leaf of $\gamma$ with $\partial N(\gamma)$. 
%Here one uses that $\ls(\gamma)$ is an annulus.
%f $\ls(\gamma)$ were a M\"{o}bius band, then $b$ would intersect $a$ twice and $a,b$ could
%not be generators for $\pi_1(P)$. 
%This is also defined modulo inverse in $\pi_1(P)$. 
Choose the orientation in $b$ to be the one induced by the positive flow direction in $\gamma$.
This is the ``longitude" in $\partial N(\gamma)$ 
in this case.
Do $(1,n)$ Dehn surgery on $\gamma$. Fried \cite{Fr} showed
how to do this along the flow: blow up the orbit $\gamma$ (producing a boundary torus). Then blow
down this boundary  torus to a closed curve according to the surgery coefficients $(1,n)$. 
The resulting flow is still an Anosov flow
and is denoted by $\Phi_{(1,n)}$. The orbit $\gamma$ blows up to a torus and then blows
down to an orbit  of $\Phi_{(1,n)}$. With this construction notice that there
is a bijection between the orbits of $\Phi$ and the orbits of the surgered flow
$\Phi_{(1,n)}$. 

\begin{theorem}{(Fe1)}{}
Let $\Phi$ be an $\rrrr$-covered Anosov flow in $M^3$ of skewed type. Let $\gamma$ be 
a closed orbit so that $\ls(\gamma)$ is an annulus. Then under a positivity condition, every
Anosov flow $\Phi_{(1,n)}$ is $\rrrr$-covered and of skewed type. Under appropriate
choices of the basis of $\pi_1(N(\gamma))$ the positivity condition is satisfied for
all positive $n$. In particular this is true if $\Phi$ is the geodesic flow in $T_1 S$ where
$S$ is a closed hyperbolic surface.
\label{rcov}
\end{theorem}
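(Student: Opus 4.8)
The plan is to pass to the universal cover, track what Fried's surgery \cite{Fr} does to the orbit space, and reduce the $\rrrr$-covered skewed conclusion to two intrinsic properties of the resulting bifoliated plane. Recall from Fried's construction that $\Phi_{(1,n)}$ is again an Anosov flow, that there is a canonical bijection between the closed orbits of $\Phi$ and those of $\Phi_{(1,n)}$, and that away from a tubular neighborhood of $\gamma$ (resp.\ of the orbit into which $\gamma$ is blown down) the flow, and hence the weak stable and unstable foliations, are unchanged. By the fact recalled above that the orbit space of any Anosov flow is homeomorphic to ${\bf R}^2$, the orbit space ${\cal O}_{(1,n)}$ of the lifted flow $\widetilde{\Phi}_{(1,n)}$ is ${\bf R}^2$, carrying transverse one-dimensional foliations ${\cal O}^s_{(1,n)}, {\cal O}^u_{(1,n)}$. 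Being $\rrrr$-covered of skewed type is intrinsic to this triple: it holds exactly when the leaf space of ${\cal O}^s_{(1,n)}$ is Hausdorff (hence $\cong \rrrr$) and some unstable leaf fails to meet some stable leaf. So it suffices to produce a model for $({\cal O}_{(1,n)}, {\cal O}^s_{(1,n)}, {\cal O}^u_{(1,n)})$ and verify these two properties.

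Second, I would construct ${\cal O}_{(1,n)}$ from ${\cal O}$ by a $\pi_1$-equivariant cut-and-reglue along the lifts of $\gamma$. Let $p \in {\cal O}$ be the point of a fixed lift $\widetilde\gamma$, with stabilizer generated by $g \in \pi_1(M)$; its stable leaf $l^s$ and unstable leaf $l^u$ meet only at $p$ and cut a neighborhood of $p$ into four sectors. Fried's surgery, done along the flow, replaces the local flow picture near $\gamma$ by a version twisted $n$ times, and so replaces the orbit space near $p$ by the four sectors reglued along $l^s$ and $l^u$ with an $n$-fold shift along the $g$-periodic structure of the annulus $\Lambda^s(\gamma)$ (the choice $n=0$ recovering ${\cal O}$). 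Carrying this out over the whole orbit $\{h(p) : h \in \pi_1(M)\}$, with the deck action corrected near the surgered points so as to record that the meridian of the filling solid torus is now $a + nb$, yields a simply connected surface with two transverse one-dimensional foliations. Using Proposition \ref{unknot} to control the complement of each lifted orbit, and comparing the covers of $M' = M - \mathring{N}(\gamma)$ induced from $M$ and from $M_{(1,n)}$, one identifies $\widetilde{M}_{(1,n)}$ with $\widetilde{M}$ away from the surgery orbits, hence identifies the reglued surface with ${\cal O}_{(1,n)}$ and the reglued foliations with ${\cal O}^s_{(1,n)}, {\cal O}^u_{(1,n)}$.

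Third, I would deduce the two properties. A single reglue along a stable leaf $l^s_{h(p)}$ can create a branch point in the leaf space of ${\cal O}^s_{(1,n)}$ only when the shift there points in the wrong direction relative to the transverse orientation of ${\cal O}^s$; the positivity condition is precisely the requirement that, for the chosen basis of $\pi_1(N(\gamma))$, the surgery slope $a+nb$ puts every shift in the good direction. Granting positivity, one argues that the (mutually conjugate) shifts are consistent and that the leaf space of ${\cal O}^s_{(1,n)}$ stays linearly ordered and non-branching, hence is $\rrrr$; skewedness is inherited from $\Phi$, since far from all the $h(p)$ one still finds an unstable leaf of ${\cal O}^u_{(1,n)}$ and a stable leaf of ${\cal O}^s_{(1,n)}$ that do not meet. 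Finally, for $\Phi$ the geodesic flow of a closed hyperbolic surface $S$, take $a$ the meridian of $N(\gamma)$ and $b = \Lambda^s(\gamma) \cap \partial N(\gamma)$ oriented by the flow: here ${\cal O}$ is the standard skewed strip (oriented geodesics of ${\bf H}^2$ with its two horocyclic foliations), $g$ acts as a hyperbolic translation, and a direct slope computation shows $a+nb$ is on the positive side for every $n>0$, giving the final assertion.

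I expect the main obstacle to be the global part of the third step. Each local reglue is innocuous, but one must show that performing infinitely many of them simultaneously — along leaves through the points $h(p)$, which need not form a discrete subset of ${\cal O}$ since the $\pi_1(M)$-action on the orbit space is not properly discontinuous — leaves the leaf space of ${\cal O}^s_{(1,n)}$ Hausdorff, with no branching produced in a limit. The control should come from the fact that all the shifts are conjugate under the corrected deck group and all point in the positive direction, so they can neither cancel nor pile up into a branch point; making this rigorous, together with the bookkeeping needed to identify $\widetilde{M}_{(1,n)}$ with $\widetilde{M}$ over $M'$ in spite of the change of fundamental group, is where the real work lies.
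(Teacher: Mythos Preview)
This theorem is not proved in the present paper: it is stated in the ``Previous results and definitions'' section and attributed to \cite{Fe1} (the label ``(Fe1)'' in the theorem heading is the citation), so there is no proof here to compare your proposal against. The paper simply quotes the result and uses it as input to Theorem~\ref{main}.

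That said, your outline is broadly in the spirit of the argument in \cite{Fe1}: one does analyze the effect of the $(1,n)$ surgery on the lifted foliations and on the orbit space, and the ``positivity condition'' is exactly the sign requirement that makes all the local reglues push the stable leaf space in a consistent direction so that no branching is created. Your identification of the main difficulty --- controlling the infinitely many simultaneous reglues, since the lifts of $\gamma$ are not discrete in $\oo$ --- is accurate; in \cite{Fe1} this is handled not by working directly in $\oo$ but by building an explicit transversal to $\wls$ in $\mi_{(1,n)}$ (equivalently, a continuous section of the stable leaf space) piece by piece across lifts of the surgery torus, which sidesteps the accumulation issue. If you want to complete a self-contained proof along your lines you would need to make that global step rigorous; as written it is a plausible sketch but not a proof.
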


In general the positivity condition is satisfied either for all positive $n$ or for all
negative $n$ \cite{Fe1}. 
The issue is that the meridian is well undefined up to inverse.
Hence there are two possibilities for the basis, and one of them satisfies the positivity
condition for every positive $n$.
As it is not needed in this article we do not specify exactly when the positivity condition
holds.

\begin{theorem}{}{(\cite{Fe1})}
Let $M = T_1 S$ where $S$ is a closed, orientable hyperbolic surface and let $\Phi$ be
the geodesic flow in $M$. Let $\gamma$ be a closed geodesic in $S$ which fills
$S$. 
Let $\gamma_1$ be a periodic orbit of $\Phi$ which projects to $\gamma$ in $S$.
Do $(1,n)$ Dehn surgery on $\gamma_1$ satisfying the positivity condition to generate
manifold $M_s$ and Anosov flow $\Phi_s$. 
By theorem \ref{rcov} \  $\Phi_s$ is $\rrrr$-covered. 
Suppose that $n$ is big enough so that
$M_s$ is hyperbolic. 
Then 
 every closed orbit of $\Phi$ is freely homotopic to infinitely many other closed orbits.
\label{infinite}
\end{theorem}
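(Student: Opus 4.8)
The plan is to combine the structure theory of skewed $\rrrr$-covered Anosov flows with the hypothesis that $M_s$ is hyperbolic. We prove the statement for the surgered flow $\Phi_s$; by Fried's construction the closed orbits of $\Phi$ and $\Phi_s$ are in canonical bijection, so this is the asserted statement for $\Phi$. By Theorem \ref{rcov}, $\Phi_s$ is $\rrrr$-covered of skewed type, so its orbit space $\oo$ carries the strip model described above, and I will use the associated combinatorics of lozenges from \cite{Fe1} (see also \cite{Ba1}). Let $\beta$ be an arbitrary closed orbit of $\Phi_s$, pick a lift $\widetilde\beta$ in $\mi$, and let $g_0\in\pi_1(M_s)$ generate its (infinite cyclic) stabilizer. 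In the skewed $\rrrr$-covered case the point of $\oo$ determined by $\widetilde\beta$ is a corner of a unique bi-infinite chain of lozenges $\dots,L_{-1},L_0,L_1,\dots$, whose corners are pairwise distinct points of $\oo$ and are the lifts $\dots,\widetilde\beta_{-1},\widetilde\beta_0=\widetilde\beta,\widetilde\beta_1,\dots$ of periodic orbits $\beta_i=\pi(\widetilde\beta_i)$; equivalently, $\widetilde\beta_i$ is $\widetilde\beta$ moved $i$ times by the canonical equivariant ``skewing'' self-map of $\oo$. Three features of this chain will be used: (i) the two corners of a single lozenge project to freely homotopic closed orbits with opposite orientations (closely related to Theorem \ref{inverse}), so each $\beta_i$ is freely homotopic to $\beta$ or to $\beta^{-1}$, and the orientations alternate with $i$; (ii) $g_0$ stabilizes the whole chain and in fact fixes every corner $\widetilde\beta_i$; (iii) the stabilizer in $\pi_1(M_s)$ of each $\widetilde\beta_i$ is exactly $\langle g_0\rangle$.

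Next I would argue by contradiction: suppose the free homotopy class of $\beta$ is finite. By (i) the orbits $\beta_i$ lie in this finite set, and since their orientations alternate, there exist $i<j$ with $j-i$ even and $\beta_i=\beta_j$ as oriented orbits. Choose $h\in\pi_1(M_s)$ with $h\widetilde\beta_i=\widetilde\beta_j$. Because the bi-infinite chain through any corner is unique, $h$ must preserve the chain and shift the indices by $k_0:=j-i>0$; in particular $h\notin\langle g_0\rangle$, since $h\widetilde\beta=\widetilde\beta_{k_0}\ne\widetilde\beta$. By (ii), $g_0$ fixes $\widetilde\beta_j=h\widetilde\beta_i$, so $h^{-1}g_0h$ fixes $\widetilde\beta_i$ and hence lies in $\langle g_0\rangle$ by (iii); applying (iii) also to $h^{-1}\widetilde\beta_i=\widetilde\beta_{i-k_0}$ forces $h^{-1}\langle g_0\rangle h=\langle g_0\rangle$, i.e.\ $h^{-1}g_0h=g_0^{\pm1}$. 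Then $h^2$ centralizes $g_0$, and since $h^2\widetilde\beta=\widetilde\beta_{2k_0}\ne\widetilde\beta$ we have $h^2\notin\langle g_0\rangle$. Therefore $\langle g_0,h^2\rangle$ is an abelian, torsion-free, $2$-generated subgroup of $\pi_1(M_s)$ of rank exactly $2$, i.e.\ a copy of $\zz$.

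But for $n$ large, $M_s$ is a closed hyperbolic $3$-manifold, so $\pi_1(M_s)$ contains no subgroup isomorphic to $\zz$ --- a contradiction. Hence the free homotopy class of $\beta$ is infinite. Since $\beta$ was arbitrary, every closed orbit of $\Phi_s$ (equivalently, of $\Phi$) has infinite free homotopy class.

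The main work to be imported is the lozenge/skewing structure of skewed $\rrrr$-covered flows from \cite{Fe1,Ba1}: that every periodic orbit lies in a \emph{unique} bi-infinite chain of lozenges with periodic corners, each corner fixed by a shared infinite cyclic group of deck transformations that is exactly its stabilizer. Granting that, the only genuinely new ingredient is the interaction with hyperbolic Dehn surgery: for $n$ large $M_s$ is atoroidal, which destroys the periodicity $\beta_i=\beta_j$ that \emph{is} present --- and is exactly why free homotopy classes have cardinality two --- for the geodesic flow on the Seifert fibered manifold $T_1S$. I expect no serious difficulty beyond carefully citing these structural facts; the one point to double-check is the orientation bookkeeping in the step $\beta_i=\beta_j$, i.e.\ that one may take $j-i$ even so that the orbits agree as oriented curves.
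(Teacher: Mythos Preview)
Your proposal is correct and follows essentially the same route as the paper. The paper does not reprove Theorem~\ref{infinite} separately (it is cited from \cite{Fe1}), but the identical mechanism appears in the proof of the Main Theorem (part ii): build the bi-infinite chain $\{\widetilde\beta_i\}$ from the skewed $\rrrr$-covered structure, note these are exactly the $g$-invariant orbits, and observe that a coincidence $\beta_i=\beta_j$ yields a $\pi_1$-injective torus (equivalently, a $\zz$ subgroup), which is impossible when $M_s$ is closed hyperbolic. Your explicit computation that $h^{-1}g_0h=g_0^{\pm1}$ and hence $\langle g_0,h^2\rangle\cong\zz$ is exactly the paper's ``taking the square of this free homotopy if necessary'' step, and your parity remark ($j-i$ even) is the same orientation bookkeeping the paper handles by allowing a Klein bottle and then squaring.
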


\section{Atoroidal submanifolds of unit tangent bundles of surfaces}

Let $S$ be a closed hyperbolic surface and let $M = T_1 S$ be the unit tangent
bundle of $S$. Notice that $M$ is orientable, whether $S$ is orientable
or not. In the next section we will do Dehn surgery on a closed
orbit of the geodesic flow to obtain the examples of flows for our Main theorem.
We use the following notation to denote the projection map

$$\tau: \ M = T_1 S \ \rightarrow \ S$$

\noindent
which is the projection of a unit tangent vector to its basepoint in $S$.

Let $\Phi$ be the geodesic flow in $M$. It is well known that $\Phi$ is an Anosov flow \cite{An}.
Let $\alpha$ be a closed geodesic in $S$. This geodesic of $S$
generates two orbits of $\Phi$, let $\alpha_1$ be one such orbit.
This is equivalent to picking an orientation along $\alpha$.
Let $S_1$ be a subsurface of $S$ that $\alpha$ fills. If 
$\alpha$ is simple then $S_1$ is an annulus. If $\alpha$ fills
$S$ then $S_1 = S$.  Let $S_2$ be the closure in $S$ of 
$S - S_1$ $-$ which may be empty.
Let $M_i = T_1 S_i, \ i = 1,2$. Notice that both $M_1$ and $M_2$ are Seifert fibered (with boundary).

The purpose of this section is to prove the following result.

\begin{proposition}{(atoroidal)}{}
The submanifold \  $M_1 - \alpha_1$ \ is homotopically atoroidal.
\label{ator}
\end{proposition}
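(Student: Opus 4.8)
The plan is to run the argument through the Seifert fibered structure of $M_1=T_1S_1$ together with the torus theorem, and to feed in the unknottedness of closed orbits (Proposition~\ref{unknot}) for the one situation the Seifert picture does not control. First I would record the relevant structure. Since $S_1$ has non-empty boundary and $\chi(S_1)<0$, the oriented circle bundle $M_1=T_1S_1$ is trivial, $M_1\cong S_1\times\cs$; it is Seifert fibered by $\tau$ with regular fiber $h$, and $\pi_1(M_1)\cong\pi_1(S_1)\times{\bf Z}$ with $\pi_1(S_1)$ free of rank $\ge2$ and the ${\bf Z}$-factor generated by $h$. The orbit $\alpha_1$ is transverse to the fibers, so $\tau(\alpha_1)=\alpha$ and $[\alpha_1]=(a,m)$ with $a=[\alpha]\neq1$. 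I would repeatedly use two facts coming from the hypothesis that $\alpha$ is a closed geodesic filling $S_1$: $a$ is primitive (so $[\alpha_1]$ is primitive in $\pi_1(M_1)$), and $a$ is filling, so no conjugate of $a$ lies in the fundamental group of a proper essential subsurface — in particular $a$ is not peripheral and $i(\alpha,c)>0$ for every essential simple closed curve $c\subset S_1$. A routine check shows $M_1-\alpha_1$ is irreducible (a knot complement in the irreducible $M_1$, with $\alpha_1$ in no ball since $a\neq1$) with incompressible boundary $\partial M_1\sqcup\partial N(\alpha_1)$, hence Haken; so by the torus theorem \cite{Ja-Sh} it will suffice to show every embedded incompressible torus $T\subset M_1-\alpha_1$ is boundary parallel (the exceptional Seifert fibered cases of the torus theorem are easily excluded, $\partial(M_1-\alpha_1)$ having at least two torus components).

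Next I would split into two cases. Suppose first $T$ is incompressible in $M_1$. Since $M_1$ is Seifert fibered over a surface of negative Euler characteristic, $T$ is isotopic in $M_1$ to a vertical torus $\tau^{-1}(c)$, $c$ an essential simple closed curve in $S_1$ (there are no horizontal tori because $\chi(S_1)<0$). Being connected and disjoint from $T$, the curve $\alpha_1$ lies on one of the two sides of $T$ in $M_1$, and which side is detected by $\pi_1$. If $c$ is peripheral, one side is a product region $R\cong T^2\times[0,1]$ with $\partial R=T\sqcup\partial_iM_1$ for a boundary component $\partial_iM_1$; since $a$ is filling, $[\alpha_1]\notin\pi_1(R)$, so $\alpha_1$ lies on the other side, $R\subset M_1-\alpha_1$, and $T$ is parallel to $\partial_iM_1$ in $M_1-\alpha_1$. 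If $c$ is not peripheral, I would pass to the cover $\widehat{M_1}\to M_1$ corresponding to $\pi_1(\tau^{-1}(c))=\langle[c]\rangle\times\langle h\rangle$; then $\widehat{M_1}\cong T^2\times\rrrr$, and $T$ lifts to a torus $\widehat T$ carrying all of $\pi_1(\widehat{M_1})$, hence isotopic to a level $T^2\times\{0\}$ that separates the two ends. As $a$ is filling it is not conjugate into $\langle[c]\rangle$, so every lift of $\alpha_1$ is a properly embedded line; and since $i(\alpha,c)>0$, at least one such lift runs from one end of $T^2\times\rrrr$ to the other — this is read off from the endpoints, on the circle at infinity of $\widetilde{S_1}$, of the geodesic axis of a suitable conjugate of $a$ relative to that of $[c]$. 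That line must meet $\widehat T$, contradicting $T\cap\alpha_1=\emptyset$, so this case does not arise.

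Now suppose $T$ is compressible in $M_1$. As $M_1$ is irreducible, $T$ bounds a solid torus $W\subset M_1$ (the alternative that $T$ lies in a ball would make $\alpha_1$ null-homotopic). If $\alpha_1\not\subset W$ then the meridian disk of $W$ compresses $T$ inside $M_1-\alpha_1$, a contradiction; so $\alpha_1\subset\mathring W$. The core of $W$ is not null-homotopic in $M_1$, hence $\pi_1(W)$ injects into $\pi_1(M_1)$, and as $[\alpha_1]$ is primitive it generates $\pi_1(W)$. The point is then to prove that $\alpha_1$ is isotopic in $W$ to the core of $W$; given this, $W-\alpha_1\cong T^2\times[0,1]$ and $T=\partial W$ is parallel in $M_1-\alpha_1$ to $\partial N(\alpha_1)$, so $T$ is again boundary parallel. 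To prove it I would lift $(W,\alpha_1)$ to the universal cover $\mi$ of $M=T_1S$: since $\pi_1(M_1)$ injects into $\pi_1(M)$, $W$ lifts to its universal cover $\widetilde W\subset\mi$, a solid cylinder containing a single line $\widetilde\alpha_1$ covering $\alpha_1$; by Proposition~\ref{unknot} $\widetilde\alpha_1$ is unknotted in $\mi$, and from this I would deduce, by a cancellation-of-connected-sums argument, that $\widetilde\alpha_1$ is the core of $\widetilde W$, then descend this (compatibly with the ${\bf Z}$ of deck transformations) to the statement that $\alpha_1$ is the core of $W$.

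Combining the two cases, $M_1-\alpha_1$ contains no embedded incompressible torus which is not boundary parallel, hence by the torus theorem it is homotopically atoroidal. The step I expect to be the main obstacle is the last one: upgrading ``$\widetilde\alpha_1$ is globally unknotted in $\mi$'' to ``$\widetilde\alpha_1$ is the core of the lifted solid cylinder $\widetilde W$'', and arranging this so that it descends to $M_1$ — equivalently, ruling out that $\alpha_1$ sits inside $W$ as anything other than an isotopic copy of its core. By contrast the Seifert classification of incompressible tori, the end-to-end crossing argument in $T^2\times\rrrr$, and the peripheral bookkeeping are routine once set up.
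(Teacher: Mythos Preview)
Your strategy matches the paper's: reduce to geometric atoroidality, split on whether $T$ is incompressible in the ambient Seifert manifold, invoke the vertical-torus classification together with the filling hypothesis in the first case, and use Proposition~\ref{unknot} to identify $\alpha_1$ with the core of a solid torus in the second. The paper runs Case~1 inside $M=T_1S$ rather than $M_1$, and shows directly (by projecting the isotopy to $S$) that the curve $c$ underlying the vertical torus must be peripheral in $S_1$, so your covering argument for non-peripheral $c$ never arises there; your route works but is more elaborate than needed.

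One small gap: your parenthetical dismissal of the alternative ``$T$ lies in a ball'' in Case~2 is too fast. From $T\subset B$ it does not follow that $\alpha_1\subset B$; the compressing disk for $T$ could lie on the side of $T$ containing $\alpha_1$ and meet it. The paper treats this as a genuine subcase and dispatches it, again via Proposition~\ref{unknot}: the arc of $\alpha_1$ crossing the compressing region is unknotted in $B$, which forces $T$ to compress in $B-\alpha_1\subset M_1-\alpha_1$, a contradiction. This is exactly the same circle of ideas you already invoke for the solid-torus subcase, so it is easily repaired. Note also that your standing assumption $\chi(S_1)<0$ excludes the case where $\alpha$ is simple and $S_1$ is an annulus; the paper's argument, being run in $M$, does not need this restriction.
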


\begin{proof}{}
We will prove that $M_1 - \alpha_1$ is geometrically atoroidal.
This statement means the following: notice that $M_1 - \alpha_1$ is 
not compact, but $M_1 - \mathring{N}(\alpha_1)$ is compact. The statement
means that $M_1 - \mathring{N}(\alpha_1)$ is homotopically atoroidal.
%any $\pi_1$-injective embedded torus $T$ in $M_1 - \alpha$ is
%homotopic, and hence isotopic, to the boundary.
%It could be isotopic to the boundary of a regular neighborhood 
%of $\alpha_1$.
Notice that $M_1 - \alpha_1$ is irreducible \cite{He}.
Gabai \cite{Ga} showed that since $M_1 - \alpha_1$ is not
a small Seifert fibered space  then
$M_1 - \alpha_1$ is also homotopically atoroidal.
%: any $\pi_1$-injective 
%map from a torus into $M_1 - \alpha_1$ is homotopic to the
%oundary.

Let $T$ be an incompressible torus in $M_1 - \alpha_1$. 
We think of $T$ as contained in $M$.
There are $2$ possibilities:

\vskip .15in
\noindent
{\underline {Case 1}} $-$ $T$ is $\pi_1$-injective in $M$.

Here $T$ is contained in $M - M_2$. We use that $M$ is Seifert fibered.
In addition $T$ is incompressible, so it is an essential lamination in $M$ \cite{Ga-Oe}.
By Brittenham's theorem \cite{Br} $T$ is isotopic to either a vertical
torus or a horizontal torus in $M$. Vertical torus means it is a union
of $S^1$ fibers of the Seifert fibration. Horizontal torus means
that it is transverse to these fibers. Since $S$ is a hyperbolic
surface, there is no horizontal torus in $M = T_1 S$. It follows that $T$ is
isotopic to a vertical torus $T'$. In addition since $T$ itself is disjoint
from $M_2$ and $M_2$ is saturated by the Seifert fibration, we can push
the isotopy away from $M_2$ and suppose it is contained in $M_1$.
Finally the isotopy forces an isotopy of the orbit $\alpha_1$ into
a curve $\alpha'$ disjoint from $T'$. 
This isotopy projects by $\tau$ to an homotopy in $S$ from $\alpha$ to
a curve $\alpha^*$ and the image of this homotopy is contained in 
$S_1$, since the isotopy in $M$ has image in $M_1$.
The curve $\alpha^*$ is disjoint from the projection $\tau(T')$.
Since $T'$ is vertical  this projection is a simple
closed curve $\beta$ in $S_1$. Since $\alpha$ fills $S_1$, $\alpha^*$ is
homotopic to $\alpha$ in $S$, and $\beta$ is disjoint from
$\alpha^*$,  it now
follows that $\beta$ is a peripheral curve in $S_1$.
By another isotopy we can assume that $\beta$ does not intersect
$\alpha$ or that $T'$ does not intersect $\alpha_1$.

In addition the isotopy from $T$ to $T'$ can be extended to an isotopy
from $M_1$ to itself. The geometric intersection number of $T, T'$
with $\alpha_1$ is zero. So we can adjust the isotopy  so that 
the images of $T$ under the isotopy never intersect $\alpha_1$, and consequently
we can further adjust it so that it leaves $\alpha_1$ fixed
pointwise. 
In other words this induces an isotopy in $M_1 - \alpha_1$ from
$T$ to $T'$. This shows that $T$ is peripheral
in $M_1 - \alpha_1$.
This finishes the proof in this case.

\vskip .15in
\noindent
{\underline {Case 2}} $-$ $T$ is not $\pi_1$-injective in $M$.

In particular since $T$ is two sided (as $M$ is orientable), then
$T$ is compressible \cite{He}. This means that there is a closed disk
$D$ which compresses $T$ \cite{He}, chapter 6. 
Since $T$ is incompressible in $M_1 - \alpha_1$,
then $D$ intersects $\alpha_1$. 
Let $D_1, D_2$ be parallel isotopic copies of $D$ very near $D$ which 
also are compressing disks for $T$. Then $D_1, D_2$ intersect
$T$ in two curves which partition $T$ into two annuli.
One annulus is very near both $D_1$ and $D_2$, we call it $A_1$, let
$A$ be the other annulus which is almost all of $T$. Then $A \cup D_1 \cup D_2$
is an embedded two dimensional sphere $W$. Since $M$ is irreducible
then $W$ bounds a $3$-ball $B$. 
There are two possibilities for the
sphere $W$ and ball $B$. In addition $A_2 \cup D_1 \cup D_2$ also
obviously bounds a ball $B_1$ which is very near the disk $D$.

\vskip .05in
Suppose first that the ball $B$ contains the torus $T$. This means that $A_2$ and
consequently also $B_1$, are both contained in $B$. In addition $B_1$ 
is a regular tubular neighborhood of a properly
embedded arc $\gamma$ in $B$. 
The intersection of $\alpha_1$ with $B_1$  is a collection of arcs 
which are isotopic to the core $\gamma$ of $B_1$. Let $\delta_1$ be one
such arc. By Proposition \ref{unknot} flow lines of Anosov flows
lift to unknotted curves in $\mi$. This implies  that $\gamma$ is 
unknotted in $B$ and also 
implies that $\pi_1(B - \gamma)$
is ${\bf Z}$. In particular the torus $T$ is compressible in $B - B_1$, that is,
the closure of $B - B_1$ is a solid torus. It follows that the 
$T$ is compressible in $M_1 - \alpha_1$. 
This contradicts the assumption that $T$ is incompressible in $M_1 - \alpha_1$.

\vskip .05in
The second possibility is that the ball $B$ does not contain $T$. 
In particular $B$ and $B_1$ have disjoint interiors and the
the union $B \cup B_1$ is a solid torus $V$ with boundary $T$.
The union of $B$ and $B_1$ cannot be a solid Klein bottle because 
$M$ is orientable. 
This solid torus lifts to an infinite solid tube $\widetilde V$ in $\mi$
with boundary $\widetilde T$ which is an infinite cylinder.
Notice that there is a lift $\widetilde \alpha_1$ of $\alpha_1$
contained in $\widetilde V$ so $\widetilde T$ cannot be compact.
Again by the result of Proposition \ref{unknot}, the infinite curve
$\widetilde \alpha_1$ is unknotted in $\mi$ and hence it is isotopic to
the core of $\widetilde V$. 

Let $\beta$ be a simple closed curve in $V$ which is isotopic to the core of $V$.
If $\alpha_1$ is not isotopic to $\beta$ then it is homotopic to a power
$\beta^n$ where $n > 1$. Projecting $\beta$ to $\tau(\beta)$ in $S$ we obtain
a closed curve in $S$ so that $(\tau(\beta))^n$ is freely homotopic to $\alpha$.
But $\alpha$ is an {\underline {indivisible}} closed geodesic and
represents an indivisible element of $\pi_1(S)$. It follows that this cannot happen.
We conclude that $\alpha_1$ is isotopic to the core of $V$.
It follows that $T$ is isotopic to the boundary of a regular 
neighborhood of $\alpha_1$ in $M_1 - \alpha$ and hence again $T$ is peripheral
in $M_1 - \alpha_1$.

This finishes the proof of proposition \ref{ator}.
\end{proof}

\noindent
{\bf {Remark}} In the case that $\alpha$ fills $S$ Proposition \ref{ator} is well
known and there is a written proof by Foulon and Hasselblatt in \cite{Fo-Ha}.

Since $M_1 - \alpha_1$ is atoroidal  the geometrization theorem in the Haken
case \cite{Th1,Th2} shows that $M_1 - \alpha_1$ admits a hyperbolic
structure. The hyperbolic Dehn surgery
theorem of Thurston implies that for almost all Dehn fillings
along $\alpha_1$, the resulting manifold $M_s$ is hyperbolic. Notice
that since $M_1$ has boundary, the statement $M_s$ is hyperbolic
means that the interior of $M_s$ has a complete hyperbolic structure
of finite volume, and each (torus) component of $M_1$ generates
a cusp in the hyperbolic structure in the interior of $M_s$.

\section{Diversified homotopic behavior of closed orbits}

First we prove the statements about free homotopy classes of suspension
Anosov flows and geodesic flows mentioned in the introduction.
Suppose first that $\Phi$ is the geodesic flow in $M = T_1 S$, where
$S$ is a closed, orientable hyperbolic surface. Suppose that $\alpha, \beta$
are closed orbits of $\Phi$ which are freely homotopic to each other in $M$. Then the
projections $\tau(\alpha), \tau(\beta)$ of these orbits to the surface
$S$ are freely homotopic in $S$. But $\tau(\alpha), \tau(\beta)$ are 
closed geodesics in a hyperbolic surface, so they are freely homotopic
if and only if they are the same geodesic. If $\alpha$ and $\beta$ are
distinct, this can only happen if they represent the same geodesic
$\tau(\alpha)$ of $S$  which is being traversed in opposite directions.
Conversely if $\tau(\alpha) = \tau(\beta)$ and they are traversed
in opposite directions, there is a free homotopy from $\alpha$ 
to $\beta$. This is achieved by considering all unit tangent vectors
to $\tau(\alpha)$ in the direction of $\alpha$ and then
at time $t$, $0 \leq t \leq 1$, rotating all these vectors
by an angle of $t \pi$. At $t = \pi$ we obtain the tangent vectors
to $\tau(\alpha)$ pointing in the opposite direction, that is,
the direction of $\beta$. This shows that every free homotopic
class of the geodesic flow has exactly two elements.
The orientability of $S$ is used because if $S$ is not orientable and 
$\tau(\alpha)$ is an orientation reversing closed geodesic, one cannot
continuously turn the angle along $\tau(\alpha)$.

Now consider a suspension Anosov flow $\Phi$. By Theorem \ref{inverse},
given an arbitrary Anosov flow which admits freely homotopic closed orbits, then
the following happens. There are closed
orbits $\alpha$ and $\beta$ so that $\alpha$ is freely homotopic
to $\beta^{-1}$ as oriented periodic orbits. For
suspension Anosov flows this is a problem as follows. This is because
there is a cross section $W$ which intersects all orbits of $\Phi$.
Suppose that the algebraic intersection number of $\alpha$ and $W$ is positive.
Then since $\alpha$ is freely homotopic to $\beta^{-1}$ it follows
that the algebraic intersection number of $\beta$ and $W$ is negative.
But this is impossible as $W$ is a cross section and transverse to $\Phi$.
This shows that every free homotopy class of a suspension is a singleton.
Another proof of this fact is the following. There is a path metric in 
$M$ which comes from a Riemannian metric in the universal
cover $\mi \cong \rrrr^3$ with coordinates $(x,y,t)$ given by
the formula $ds^2 \ = \ \lambda^{2t}_1 dx^2 + \lambda^{-2t}_2 dy^2
+ dt^2    \ \ (1)$, where $\lambda_1, \lambda_2$ are real numbers $> 1$.
The lifted flow $\wwp$ has formula $\wwp_t(x,y,t_0) = 
(x,y, t_0 + t)  \ \ (2)$. \
If $\alpha, \beta$ are freely homotopic closed orbits of $\wwp$, then 
they lift to two distinct orbits of $\wwp$  which are a bounded distance from each other.
But formulas (1) and (2) show that no two distinct entire orbits of $\wwp$ are
a bounded distance from each other. This also shows that free homotopy
classes are singletons.

For the property of infinite free homotopy classes for the examples in hyperbolic
$3$-manifolds see Theorem \ref{infinite}.

\vskip .15in
We now proceed with the construction of  the examples  with diversified homotopic 
behavior and we prove the Main theorem.

Let $S$ be a hyperbolic surface and $\alpha$ a closed geodesic
that does not fill $S$. As in the previous section let $S_1$ be a 
subsurface that $\alpha$ fills and let $S_2$ be the closure of
$S - S_1$. Let $M = T_1 S$ and $\Phi$ the geodesic flow 
of $S$ in $M$. Let $\alpha_1$ be an orbit of $\Phi$ so that $\tau(\alpha_1)
= \alpha$. Let $M_i = T_1 S_i, \  i = 1, 2$.
In the previous section we proved that $M_1 - \alpha_1$ is atoroidal.

Now we will do Fried's  Dehn surgery on $\alpha_1$. 
%Let $\ls, \lu$ be the stable and unstable foliations of $\Phi$.
For simplicity we will assume that the unstable foliation of $\Phi$
 (or equivalently the stable foliation of $\Phi$)
is transversely orientable. This is equivalent to the surface
$S$ being orientable.
In particular this implies that the stable leaf of $\alpha_1$ is
a annulus. Let $Z$ be the boundary of a small tubular solid torus
neighborhood $Z_0$ of $\alpha_1$ contained in $M_1$. 
Then $Z$ is a two dimensional torus and we will choose a base for
$\pi_1(Z) = H_1(Z)$. We assume that $Z$ is transverse to the local
sheet of the stable leaf of $\alpha_1$. Then this local sheet intersects
$Z$ in a pair of simple closed curves. Each of these defines a 
longitude $(0,1)$ in $\pi_1(Z)$, choose the direction which is
isotopic to the flow forward direction along $\alpha_1$.
The boundary of a meridian disk in $Z_0$ defines the meridian
curve $(0,1)$ in $\pi_1(Z)$. The meridian is well defined up to sign.
If the stable foliation of $\Phi$  were not transversely orientable
and $\alpha$ were an orientation reversing curve, then the stable leaf
of $\alpha$
would be a M\"{o}bius band and the intersection of the local sheet with
$Z$ would be a single closed curve. This closed curve would intersect
the meridian twice and could not form a basis of $H_1(Z)$ jointly
with the meridian. We do not want that, hence one of the reasons to
restrict to $S$ orientable.

Now we perform Fried's Dehn surgery on $\alpha_1$ \cite{Fr} as described in section
\ref{previous}.
We do $(1,n)$ surgery on $\alpha_1$, so that the following happens.
%This was extensively analysed in \cite{Fe1}. If one does $(1,n)$
%Dehn surgery 
The resulting flow is Anosov in the Dehn surgery manifold
$M_{\alpha}$. The meridian is chosen so that  for any $n > 0$ the Dehn surgery flow $\Phi_{\alpha}$
with new meridian the $(1,n)$ curve is an $\rrrr$-covered Anosov flow.
%The $\rrrr$-covered property means that that the stable
%foliation lifts to a foliation in the universal cover
%$\mi_{\alpha}$ which has leaf space homeomorphic to the reals 
%$\rrrr$. The surgery of Fried \cite{Fr} is obtained by blowing up 
%the orbit $\alpha_1$ into a two dimensional boundary torus with
%an induced flow and then blowing it back down to a closed orbit
%of $\Phi_{\alpha}$
%using the new meridian $(1,n)$.  
Recall that there is a  bijection between the orbits of the surgered flow
$\Phi_{\alpha}$ and the orbits of the original flow $\Phi$. 
Given an orbit $\gamma$ of $\Phi_{\alpha}$ we let $\gamma'$ be the
corresponding orbit of $\Phi$ under this bijection.

We are now ready to prove the prove the main result of this article, which is
restated with more detail below.

\begin{theorem}{(diversified homotopic behavior)}{}
Let $S$ be an orientable, closed hyperbolic surface with a closed geodesic
$\alpha$ which does not fill $S$. Let $S_1$ be a subsurface of $S$ which
is filled by $\alpha$ and let $S_2$ be the closure of $S - S_1$.
We assume also that $S_2$ is not a union of annuli.
Let $M = T_1 S$ with geodesic flow $\Phi$ and let $M_i = T_1 S_i, \  i = 1,2$. 
Let $\alpha_1$ be a closed orbit of $\Phi$ which projects to $\alpha$ in $S$.
Do $(1,n)$ Fried's Dehn surgery along $\alpha_1$ to yield 
a manifold $M_{\alpha}$ and an Anosov flow $\Phi_{\alpha}$ so that
$\Phi_{\alpha}$ is $\rrrr$-covered.
Since $M_2$ is disjoint from $\alpha_1$ it is unaffected by the Dehn surgery
and we consider it also as a submanifold of $M_{\alpha}$. Let $M_3$ be
the closure of $M_{\alpha} - M_2$. We still denote by $\alpha_1$ the orbit
of $\Phi_{\alpha}$ corresponding to $\alpha_1$ orbit of $\Phi$.
Proposition \ref{ator} implies that $M_3 - \alpha_1$ is atoroidal and
for $n$ big the hyperbolic Dehn surgery theorem \cite{Th1,Th2} implies that
$M_3$ is hyperbolic. Choose one such $n$. 
Consider the bijection $\beta \rightarrow \beta'$ between closed orbits
of $\Phi_{\alpha}$ and those of $\Phi$.
Then the following happens:

\begin{itemize}

\item i) Let $\gamma$ be a closed orbit of $\Phi_{\alpha}$ so that the 
corresponding orbit $\gamma'$ of $\Phi$ is homotopic into the 
submanifold $M_2$. 
Equivalently $\gamma'$ projects to a geodesic in $S$ which is disjoint
from $\alpha$ in $S$. Then $\gamma$ is freely homotopic in $M_{\alpha}$
to just one
other closed orbit of $\Phi_{\alpha}$.

\item
ii) Let $\gamma$ be a closed orbit of $\Phi_{\alpha}$ which corresponds to a closed
orbit $\gamma'$ of $\Phi$ which is not homotopic into $M_2$. Equivalently
$\gamma'$ projects to a geodesic in $S$ which transversely intersects $\alpha$.
Then $\gamma$ is freely homotopic in $M_{\alpha}$ to infinitely many other closed orbits
of $\Phi_{\alpha}$. 

\item
In addition both classes i) and ii) have infinitely many elements.

\end{itemize}
\label{main}
\end{theorem}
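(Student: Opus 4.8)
The plan is to first pin down the torus decomposition of $M_\alpha$, and then to treat case i) using the Seifert structure of $M_2$ and case ii) using the hyperbolicity of $M_3$ together with the skewed $\rrrr$-covered structure of $\Phi_\alpha$. Recall that every closed orbit of $\Phi_\alpha$ falls in exactly one of the two cases, since the corresponding orbit $\gamma'$ of the geodesic flow projects to a closed geodesic of $S$ which is either disjoint from $\alpha$ or transverse to it. For the decomposition: the Fried surgery is supported in a solid torus neighborhood of $\alpha_1$ contained in the interior of $M_1$, so $M_2 = T_1 S_2$ is untouched and $M_\alpha = M_3 \cup M_2$, glued along the tori $\partial M_2 = \partial M_3 = T_1(\partial S_1)$. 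Since $S_2$ is not a union of annuli, $M_2$ is a genuine Seifert fibered piece (its base orbifold has negative Euler characteristic), while by Proposition \ref{ator} and the hyperbolic Dehn surgery theorem $M_3$ is atoroidal, indeed hyperbolic with cusps. Each gluing torus is incompressible on both sides and $M_\alpha$ is irreducible, so $M_3 \cup M_2$ is the torus decomposition of $M_\alpha$ and $M_2$ is its unique Seifert piece; in particular $\pi_1(M_2)$ and $\pi_1(M_3)$ inject into $\pi_1(M_\alpha)$, which is the fundamental group of a graph of groups with these two vertex groups.

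For case i): because $\gamma'$ is disjoint from the surgery region, $\gamma = \gamma'$ is literally the closed orbit of the geodesic flow over the closed geodesic $\tau(\gamma') \subset S_2$, so $\gamma \subset M_2$. Inside $M_2 = T_1 S_2$ the free homotopy class of $\gamma$ has exactly two elements, namely $\gamma$ and the orbit over $\tau(\gamma')$ with reversed orientation: this is the argument already given above for geodesic flows on closed orientable surfaces, which applies verbatim to the orientable surface-with-boundary $S_2$, because $\tau$ carries a free homotopy in $T_1 S_2$ to a free homotopy of closed geodesics of $S_2$, such geodesics are freely homotopic iff equal, and the two orientations of $\tau(\gamma')$ are joined by the angle-$\pi$ rotation of the unit tangent field, which is globally defined since $S_2$ is orientable. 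To pass from $M_2$ to $M_\alpha$: if a closed orbit $\delta$ of $\Phi_\alpha$ is freely homotopic to $\gamma$ (possibly after reversing orientation, which is allowed by Theorem \ref{inverse}), then $[\delta]$ is conjugate in $\pi_1(M_\alpha)$ into $\pi_1(M_2)$; by the standard structure of conjugacy in graphs of groups, $\delta$ is then conjugate to $\gamma$ either already inside $\pi_1(M_2)$, which produces nothing new, or, in the case that $[\gamma]$ is peripheral in $M_2$, inside the adjacent vertex group $\pi_1(M_3)$ — but $M_3$ is hyperbolic, so the conjugacy class of the parabolic element $[\gamma]$ meets its cusp subgroup only in $[\gamma]$, again producing nothing new. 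Hence the free homotopy class of $\gamma$ in $M_\alpha$ is $\{\gamma, \gamma^{-1}\}$, of cardinality two ($\gamma \neq \gamma^{-1}$ since $S_2$ is orientable and $\tau(\gamma')$ is a nontrivial closed geodesic).

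For case ii): now $\tau(\gamma')$ has positive geometric intersection number with $\alpha$, equivalently $[\gamma]$ is not conjugate into $\pi_1(M_2)$, so up to free homotopy the orbit $\gamma$ runs through the hyperbolic piece $M_3$. The claim is that the free homotopy class of $\gamma$ is then infinite, via the mechanism behind Theorem \ref{infinite}. In the skewed $\rrrr$-covered flow $\Phi_\alpha$ every closed orbit sits at a corner of a lozenge in the orbit space $\oo \cong \rrrr^2$, and iterating the adjacent-lozenge construction yields a bi-infinite chain of lozenges whose corners are all closed orbits, each freely homotopic to $\gamma$ up to orientation by Theorem \ref{inverse}. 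This chain projects to only finitely many closed orbits of $\Phi_\alpha$ precisely when some deck transformation shifts it; as in \cite{Fe1}, such a shift cannot occur for an orbit whose axis is forced to travel through the atoroidal piece $M_3$, because it would produce a non-peripheral essential torus in $M_3$, i.e. an essential $\zz$ subgroup of $\pi_1(M_3)$, contradicting that $M_3$ is atoroidal. Hence the chain through $\widetilde\gamma$ projects to infinitely many distinct closed orbits and the free homotopy class of $\gamma$ is infinite. I expect this step to be the main obstacle: one must control the orbit space of the surgered flow $\Phi_\alpha$ precisely enough (this is where Theorem \ref{rcov} and the positivity condition enter), make rigorous the statement that the axis of $\gamma$ "runs through $M_3$", and rule out the shift of the lozenge chain — essentially importing the quantitative content of \cite{Fe1} into the present mixed situation where only one piece of $M_\alpha$ is hyperbolic.

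Finally, both families are infinite: class i) is parametrized (two orbits per geodesic) by the closed geodesics of $S_2$, of which there are infinitely many since $S_2$ has negative Euler characteristic; and class ii) contains $\alpha_1$ together with all closed orbits lying over the infinitely many closed geodesics that fill $S_1$, so it too is infinite. This proves the theorem.
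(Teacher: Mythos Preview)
Your overall architecture---identify the JSJ splitting $M_\alpha = M_3 \cup_{\partial} M_2$ with $M_3$ hyperbolic and $M_2$ Seifert, then handle ii) via the lozenge chain in the skewed orbit space and i) via the Seifert structure of $M_2$---matches the paper. Case ii) is essentially correct, though phrased imprecisely: the torus produced by a shift of the chain lives in $M_\alpha$, not in $M_3$; one then invokes the torus theorem to homotope it into the unique Seifert piece $M_2$, which forces $\gamma$ itself to be homotopic into $M_2$, the desired contradiction. (You also silently skip the check that $\Phi_\alpha$ is skewed rather than product; the paper dispatches this by noting $M_\alpha$ has a hyperbolic piece so cannot be a torus bundle.)

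The genuine gap is in case i). Your graph-of-groups step establishes that any $[\delta]$ conjugate to $[\gamma]$ in $\pi_1(M_\alpha)$ is already conjugate via $\pi_1(M_2)$ (in the non-peripheral situation); but this is a statement about conjugacy classes of group elements, not about closed orbits. The entire content of the theorem is that the map from closed orbits to conjugacy classes fails to be injective, so controlling the conjugacy class does not bound the number of orbits over it. Concretely, you have not ruled out a closed orbit $\delta$ with $\tau(\delta')$ crossing $\alpha$---so $\delta$ actually sits in $M_3$---which is nevertheless freely homotopic to $\gamma\subset M_2$; such a $\delta$ would be ``homotopic into $M_2$'' without being one of the two geodesic-flow orbits there. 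The paper closes exactly this hole by using case ii): the angle-rotation homotopy of $\gamma$ in $M_2$ already gives a $\pi_1$-injective torus, hence a deck transformation shifting the lozenge chain through $\widetilde\gamma$, so the free homotopy class of $\gamma$ is finite. Then any $\delta$ in this class has the same (finite) class, and by the contrapositive of ii) one concludes $\tau(\delta')$ is disjoint from $\alpha$, i.e.\ $\delta$ genuinely lies in $M_2$. Only at that point does the paper push the connecting annulus into $M_2$ (via acylindricity of $M_3$, which is the geometric counterpart of your graph-of-groups invocation) and appeal to the cardinality-two fact for geodesic flows. Your case i) is repairable along these lines, but as written the step ``which produces nothing new'' is where the argument breaks.
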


\begin{proof}{}
First we prove that both classes i) and ii) are infinite.
Since orbits of $\Phi_{\alpha}$ are in one to one correspondence
with orbits of $\Phi$,   one can think of these as  statements about closed orbits
of $\Phi$.
Any closed geodesic of $S$ which intersects $\alpha$ is in class ii).
Clearly there are infinitely many such geodesics so class ii) is infinite.
On the other hand since $S_2$ is not a union of annuli, there is
a component $S'$ which is not an annulus. Any geodesic $\beta$ of
$S$ which is homotopic into $S'$ creates an orbit in class i). 
Since $S'$ is not an annulus, there are infinitely many such geodesics
$\beta$. This proves that i) and ii) are infinite subsets.

An orbit $\delta$ of $\Phi$ which projects in $S$ to a geodesic intersecting
$\alpha$ cannot be homotopic into $M_2$.
Otherwise the homotopy projects in $S$ 
to an homotopy from a geodesic intersecting
$\alpha$ to a curve in $S_2$ and hence to a geodesic
not intersecting $\alpha$.  This is impossible as closed geodesics in hyperbolic
surfaces intersect minimally.
Conversely if an orbit $\delta$ projects to a geodesic not intersecting
$\alpha$, then this geodesic is homotopic to a geodesic contained
in $S_2$. This homotopy lifts to a homotopy in $M$ from $\delta$ to a curve
in $M_2$. This proves the equivalence of the first 2 statements in i) 
and in ii).

\vskip .1in
Now we prove that conditions i), ii) imply the respective conclusions
about the size of the free homotopy classes.
Let $\widetilde \Phi_{\alpha}$ be the lifted flow to the universal
cover $\mi_{\alpha}$. 

The flow $\Phi_{\alpha}$ is $\rrrr$-covered. As explained in section \ref{previous}
there are two possibilites for $\Phi_{\alpha}$, either product or skewed.
If $\Phi_{\alpha}$ is product then $M_{\alpha}$ fibers over the circle with
fiber a torus. But in our case, $M_{\alpha}$ has a torus decomposition
with one hyperbolic piece $M_3$ and one Seifert piece $M_2$. 
Therefore it cannot fiber over the circle with fiber a torus. We conclude
that this case cannot happen.

\vskip .1in
Therefore $\Phi_{\alpha}$ is skewed.

Let then $\beta_0$ be a closed orbit of $\Phi_{\alpha}$.
since $\Phi_{\alpha}$ is an
skewed $\rrrr$-covered Anosov flow we will produce orbits $\beta_i, \  i \in {\bf Z}$
which are all freely homotopic to $\beta_0$. However it is not  a priori true
that all the orbits $\beta_i$ are distinct from each other, this will be analysed later.

Here we identify the fundamental group of the manifold with the set of 
covering translations of the universal cover.

\begin{figure}
\centeredepsfbox{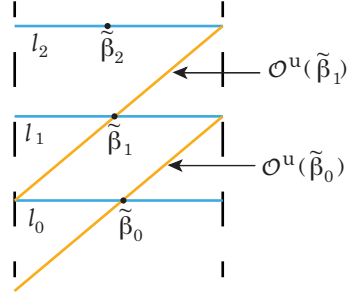}
\caption{The picture of the orbit space $\oo \cong (0,1) \times \rrrr$ of a skewed $\rrrr$-covered 
Anosov flow. The stable foliation $\oos$ is the foliation by horizontal
segments in $(0,1) \times \rrrr$. The unstable foliation $\oou$ is the foliation by
the parallel slanted segments.
This picture also shows how to construct the orbits
$\widetilde \beta_i$ starting with the orbit $\widetilde \beta_0$.}
\label{skew}
\end{figure}

\vskip .1in
\noindent
{\bf {Construction of the  orbits $\widetilde \beta_j$.}}

Lift $\beta_0$ to an orbit $\widetilde \beta_0$ contained in a stable leaf $l_0$ of $\oos$.
Let $g$ be the deck transformation of $\mi_{\alpha}$ which corresponds
to $\beta_0$ in the sense that it generates the stabilizer of $\widetilde \beta_0$.
Then $u = \oou(\widetilde \beta_0)$ intersects an open interval $J_u$ of stable leaves.
This is a strict subset of the leaf space of $\oos$ (equal to leaf space of $\wls$)
by the skewed property.
Let $l_1$ be one of the two stable leaves in the boundary of this interval.
The fact that there are exactly two boundary leaves in this interval is a direct
consequence of the fact that $\oos$ (or $\wls$) has leaf space $\rrrr$ and this fact is not
true in general. Since $g(\oou(\widetilde \beta_0)) = \oou(\widetilde \beta_0)$ and
$g$ preserves the orientation of $\oos$ (because $\ls$ is transversely orientable),
then $g(l_1) = l_1$. But this implies that there is an orbit $\widetilde \beta_1$
of $\wwp_{\alpha}$ in $l_1$ so that $g(\widetilde \beta_1) = \widetilde \beta_1$.
We refer to fig. \ref{skew} which shows how to obtain leaf $l_1$ and hence the 
orbit $\widetilde \beta_1$.
This orbit projects to a closed orbit $\beta_1$ of $\Phi_{\alpha}$ in $M_{\alpha}$.
Since both are associated to $g$, it follows that $\beta_0, \beta_1$ are freely
homotopic. More specifically if we care about orientations then the positively
oriented orbit $\beta_0$ is freely homotopic to the inverse of the positively
oriented orbit $\beta_1$. 

\vskip .1in
\noindent
{\bf {Remark}} $-$ Transverse orientability of $\ls$ is necessary for this.
%We proved that as oriented orbits $\beta_0$ is freely homotopic
%to $\beta^{-1}_1$.
If for example $\ls$ were not transversely orientable and
the unstable leaf of $\beta_0$ were a M\"{o}bius band then the transformation
$g$ as constructed above {\underline {does not preserve}} the leaf $l_1$ as constructed
above. Therefore $\beta_0$ is not freely homotopic to $\beta_1$ as unoriented curves.
But $g^2$ preserves $l_1$ and from this it follows that the square
$\beta_0^2$ (as a non simple closed curve) is freely homotopic to
$\beta^{2}_1$. 
\vskip .1in

We proceed with the construction of freely homotopic orbits of $\Phi_{\alpha}$.
From now on we  iterate the procedure above: use $\oou(\widetilde \beta_1)$ to
produce a leaf $l_2$ of $\oos$ invariant by $g$, and a closed orbit $\beta_2$ freely homotopic
to $\beta_1$ $-$ if we consider them just as simple closed curves.
We again refer to fig. \ref{skew}.
 Now iterate
and produce
$\widetilde \beta_i, i \in {\bf Z}$ orbits of $\wwp_{\alpha}$ so that they are all invariant
under $g$ and project to closed orbits $\beta_i$ of $\Phi_{\alpha}$ which are
all freely homotopic to $\beta_0$ as unoriented curves.

\vskip .1in
\noindent
{\bf {Orbits freely homotopic to $\beta_0$.}}

The covering translation $g$ preserves the leaf $\oou(\widetilde \beta_0)$.
Since $\beta_0$ is the only periodic orbit in $\lu(\beta_0)$, it follows
that $g$ only preserves the orbit $\widetilde \beta_0$ in $\oou(\widetilde \beta_0)$.
Therefore $g$ does not leave invariant any 
{\underline {stable}} leaf between $\oos(\widetilde \beta_0)$
and $\oos(\widetilde \beta_1)$ and similarly $g$ does not leave invariant
any stable leaf between $\oos(\widetilde \beta_i)$ and $\oos(\widetilde \beta_{i+1})$
for any $i \in {\bf Z}$.
It follows that the collection $\{ \oos(\widetilde \beta_i), i \in {\bf Z} \}$ is 
exactly the collection of stable leaves left invariant by $g$.

Suppose now that $\delta$ is an orbit of $\Phi_{\alpha}$ which is freely homotopic
to $\beta_0$. We can lift the free homotopy so that $\beta_0$ lifts to
$\widetilde \beta_0$ and $\delta$ lifts to $\widetilde \delta$. In particular
$g$ leaves invariant $\widetilde \delta$ and hence leaves
invariant $\oos(\widetilde \delta)$. It follows that 
$\oos(\widetilde \delta) = \oos(\widetilde \beta_i)$ for some $i \in {\bf Z}$.
As a consequence $\widetilde \delta = \widetilde \beta_i$ for 
$\widetilde \beta_i$ is the only orbit of $\wwp_{\alpha}$ left invariant
by $g$ in $\oos(\widetilde \beta_i)$. It follows that $\delta$ is one of 
$\{ \beta_j,  \ j \in {\bf Z} \}$.

\vskip .12in
\noindent
{\underline {Conclusion}} $-$ The free homotopy class of $\beta_0$ is finite if and
only if the collection $\{ \beta_i, i \in {\bf Z} \}$ is finite.
\vskip .08in

Suppose now that $\beta_i = \beta_j$ for some $i, j$ distinct.
Hence there is $f \in \pi_1(M_{\alpha})$ with $f(\widetilde \beta_i) = \widetilde \beta_j$.
Then $f$ sends $\oou(\widetilde \beta_i)$ to $\oou(\widetilde \beta_j)$.
By the definition of $\widetilde \beta_{i+1}$ it follows that 
$f$ sends $\widetilde \beta_{i+1}$ to $\widetilde \beta_{j+1}$. 
Iterating this procedure shows that
$f$ preserves the collection $\{ \widetilde \beta_k, \  k  \in {\bf Z} \}$.
In addition it follows easily that 
$f$ sends $\widetilde \beta_0$ to $\widetilde \beta_k$ for  $k = j-i$.
The free homotopy from $\beta_0$ to $\beta_k = \beta_0$ produces a $\pi_1$-injective
map of either the torus or the Klein bottle into $M$. 
We have to consider the Klein bottle because the free homotopy may be from
$\beta_0$ to the inverse of $\beta_0$ when we account for orientations along orbits.
Taking the square of this free homotopy if necessary we produce a $\pi_1$-injective
map of the torus into $M$. The torus theorem \cite{Ja,Ja-Sh} shows that the
free homotopy is homotopic into a Seifert piece of the torus decomposition
of $M_{\alpha}$. Therefore in our situation the homotopy is freely homotopic into $M_2$. 
It follows that the orbit $\beta'_0$ of $\Phi$ associated to $\beta_0$
is freely homotopic into $M_2$. Therefore the geodesic $\tau(\beta'_0)$
of $S$ does not intersect $\alpha$. 

This proves part ii) of the theorem: \  If the geodesic $\tau(\beta'_0)$ 
intersects $\alpha$ then the orbit $\beta_0$ of $\Phi_{\alpha}$
 is freely homotopic to infinitely many
other closed orbits of $\Phi_{\alpha}$.

\vskip .1in
Consider now a closed orbit $\beta_0$ of $\Phi_{\alpha}$ so that
it corresponds to a geodesic in $S$ which does not intersect $\alpha$.
This geodesic is $\tau(\beta'_0)$ which we denoted by $\gamma$. There
is a non trivial free homotopy in $M = T_1 S$ from $\beta'_0$ to itself
with the same orientation, obtained by turning the angle along $\gamma$
by a full turn, from $0$ to $2 \pi$.  Notice that this free homotopy
at some point is exactly $\beta'_0$ and at another point
it is exactly the orbit corresponding to the geodesic $\gamma$
being traversed in the opposite direction.
%The image of this free homotopy is a $\pi_1$-injective
% embedded torus $T'$ in $M$.
This free homotopy is entirely
contained in $M_2$ and therefore this free homotopy survives in 
the Dehn surgered manifold $M_{\alpha}$.
%The corresponding free homotopy in $M_{\alpha}$ has image $T$.
 By construction the image of the free homotopy in $M_{\alpha}$ contains
two distinct closed orbits of $\Phi_{\alpha}$, one of which is $\beta_0$.
%Therefore $\beta_0$ is non trivially freely homotopic to 
%itself, with same orientation and there is a minimal $i > 0$ 
%and $f$ in $\pi_1(M)$ with $f(\widetilde \beta_0) = \widetilde \beta_0$.
In particular the free homotopy class of $\beta_0$ has at least two elements.
In addition the free homotopy produces a $\pi_1$-injective map from
$T^2$ into $M$. Choose a basis $g, f$ for $\pi_1(T^2)$ (seen as covering
translations in $\mi$) so that $g$ leaves invariant a lift $\widetilde\beta_0$
of $\beta_0$. Then 

$$g f(\widetilde \beta_0) \ \ = \ \ f g(\widetilde \beta_0) \ \ = \  \ f(\widetilde \beta_0).$$

\noindent
So $g$ also leaves invariant $f(\widetilde \beta_0)$. As seen in the paragraphs
``Orbits freely homotopic to $\beta_0$", it follows that $f(\widetilde \beta_0) = \widetilde \beta_j$
for some $j$ in ${\bf Z}$. This implies that the free homotopy class
of $\beta_0$ is finite.

Let now  $\delta$ be a closed orbit of $\Phi_{\alpha}$ which is freely homotopic
to $\beta_0$. 
In particular the free homotopy class of $\delta$ is the same as the free
homotopy class of $\beta_0$ and in the part entitled 
``Orbits freely homotopic to $\beta_0$" we showed that this free homotopy class
 is finite in this case.
In addition  from what  we already 
proved in the theorem, it follows that $\delta$ is isotopic into $M_2$ and
choosing $M_2$ appropriately we can assume that $\beta_0, \delta$ are
contained in $M_2$. 
Let the free homotopy from $\beta_0$ to $\delta$ be realized by
a $\pi_1$-injective annulus $A$ which is in general position.
The annulus $A$ is a priori only immersed. Let $T = \partial M_3 
= \partial M_2$ an embedded torus in $M_{\alpha}$ which is
$\pi_1$-injective. Put $A$ in general position with respect to $T$ and
analyse the self intersections. Any component which is null homotopic
in $T$ can be homotoped away because $M_{\alpha}$ is
irreducible \cite{He,Ja}. After this is eliminated each component of $A - T$
is an a priori only immersed annulus. But since $M_3$ is a hyperbolic 
manifold with a single boundary torus $T$ it follows that $M_3$
is acylindrical \cite{Th1,Th2}. This means that any $\pi_1$-injective
properly immersed annulus is homotopic rel boundary into the boundary.
This is because parabolic subgroups of the fundamental group of $M_3$
$-$ as a Kleinian group, have an associated  maximal  ${\bf Z}^2$
parabolic subgroup \cite{Th1,Th2}. In particular this implies that
the annulus $A$ can be homotoped away from $M_3$ to be entirely
contained in $M_2$.
Therefore the free homotopy represented by the annulus $A$
survives if we undo the Dehn surgery on $\alpha$. This produces
a free homotopy between $\beta'_0$ and $\delta'$ in $M = T_1 S$.
But the free homotopy classes of geodesic flows all have exactly
two elements.
Therefore there is only one possibility for $\delta$ if $\delta$ is
distinct from $\beta_0$.
This shows that the free homotopy class of $\beta_0$ has exactly two
elements.

%Lift coherently to $\widetilde \beta_0, 
%\widetilde \delta$ orbits in  $\mi_{\alpha}$. The previous arguments show that 
%$\widetilde \delta = \widetilde \beta_j$ for some $j$. 
%In fact if there is $f$ with $f(\widetilde \beta_0) = \widetilde \beta_i$,
%then $\delta = \beta_j$ for some $0 \leq j < i$.
%The structure of the skewed flow implies that we can adjust the non trivial free homotopy
%from $\beta_0$ to $\beta_i = \beta_0$ so that it passes through $\delta$, that
%is, at some point in the homotopy it is exactly $\delta$. 
%The torus theorem shows that this homotopy can be pushed into $M_2$.
%The previous arguments show that this can be done keeping $\beta_0$ and
%$\delta$ fixed. Since the Dehn surgery does not affect $M_2$ (as as subset of $M$),
%it follows that there is a free homotopy between $\beta'_0$ and $\delta'$
%in $M$. But in the geodesic flow $\Phi$ in $M$  a closed orbit is only freely
%is true for $\beta_0$ in the flow $\Phi_{\alpha}$. 
%This shows that the free homotopy class of $\beta_0$ has only
%two elements. 

This finishes the proof of theorem \ref{main}
\end{proof}

\section{Generalizations}

There are a few ways to generalize the main result of this article. Here we mention two of them.

\vskip .1in
\noindent
{\bf {1) Finite covers and Dehn surgery}}

Let $M = T_1 S$ where $S$ is a closed orientable surface. Let $\Phi$ be the geodesic flow
in $M$. First take a finite cover of order $n$ of $M$ unrolling the circle fibers. Let this be
the manifold $M_1$ with lifted Anosov flow $\Phi_1$. Then every closed orbit 
of $\Phi_1$  is freely homotopic to $2n-1$ other closed 
orbits of the flow. The flow $\Phi_1$ is a skewed $\rrrr$-covered Anosov flow.
We use the covering map $\eta: M_1 \rightarrow M$ and the projection $\tau: M \rightarrow S$.
We do Dehn surgery on closed orbits of $\Phi_1$. Essentially the same proof as the Main
theorem yields the following result.

\begin{theorem}{}{} Let $S$ be a closed orientable surface and $M = T_1 S$
with Anosov flow $\Phi$. Let $M_1$ be a finite cover of $M$ where we unroll the Seifert fibers
and let $\Phi_1$ be the lifted flow to $M_1$.
Do $(1,n)$ Fried's flow  Dehn surgery on a closed orbit $\gamma$ of the $\Phi_1$
 so $\tau \circ \eta(\gamma)$ is a closed geodesic in $S$ which does not fill $S$
and some complementary component of $\tau \circ \eta(\gamma)$ in $S$  is not an annulus 
and so that the surgery satisfies the positivity condition. Let $\Phi_s$ be the resulting Anosov
flow in the surgery manifold.
Given  $\beta$ a closed orbit of $\Phi_s$, it  has an associated unique orbit of $\Phi_1$ which
in turn
projects under $\tau \circ \eta$  to a closed geodesic $\beta'$ of $S$.
Then i) If $\beta'$ does not intersect $\tau \circ \eta(\gamma)$ it follows $\beta$ is freely
homotopic to exactly $2n -1$ other orbits of $\Phi_s$. In addition ii) If $\beta'$ intersects
$\tau \circ \eta(\gamma)$ then $\beta$ is freely homotopic to infinitely many other closed
orbits of $\Phi_s$.
\end{theorem}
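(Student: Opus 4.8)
The plan is to follow the proof of Theorem \ref{main} essentially line by line, replacing the geodesic flow $\Phi$ on $T_1S$ by the unrolled cover $\Phi_1$ on $M_1$ and replacing the number $2$ (the cardinality of every free homotopy class of a surface geodesic flow) by $2n$ throughout. Write $\alpha=\tau\circ\eta(\gamma)$, let $S_1\subset S$ be the subsurface filled by $\alpha$, let $S_2$ be the closure of $S-S_1$, and set $N_i=\eta^{-1}(T_1S_i)$, so that $N_1$ and $N_2$ are unions of Seifert pieces of $M_1$ with $\gamma\subset N_1$ and $N_2$ disjoint from $\gamma$. Since $\eta$ unrolls the circle fibers, $N_1$ is still a circle bundle over $S_1$ and the fiber class surjects the deck group ${\bf Z}/n$, so $N_1$ is connected.

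The first step is the analogue of Proposition \ref{ator}: the manifold $N_1-\gamma$ is homotopically atoroidal. This is proved exactly as Proposition \ref{ator}, with $M_1$ in place of $M$, the base surface still $S_1$, using Proposition \ref{unknot} applied to a lift of $\gamma$ for the flow $\Phi_1$, and using that $\alpha$ is an indivisible closed geodesic so that $\gamma$ is not a proper power in $\pi_1(M_1)$. (One cannot shortcut via ``$N_1-\gamma$ covers $T_1S_1-\alpha_1$'', since $\eta^{-1}(\alpha_1)$ need not be connected; one reruns the two-case argument.) Granting this, Fried's $(1,n)$ surgery on $\gamma$ under the positivity hypothesis produces an ${\bf R}$-covered Anosov flow $\Phi_s$ by Theorem \ref{rcov}, and $M_3$, the closure of $M_s-N_2$, which is $N_1$ surgered along $\gamma$, is atoroidal, hence hyperbolic for all large $n$ by the hyperbolic Dehn surgery theorem \cite{Th1,Th2,Be-Pe}. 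Thus $M_s$ has a nonempty torus decomposition with a hyperbolic piece $M_3$ and a Seifert piece inside $N_2$ (here one uses that some component of $S_2$ is not an annulus), so $M_s$ does not fiber over the circle with torus fiber, and therefore $\Phi_s$ is of skewed type. Moreover $\Lambda^s(\Phi_s)$ is transversely orientable, as in the proof of Theorem \ref{main}: $S$ is orientable and Fried surgery on an orbit with annular stable leaf preserves transverse orientability of the stable foliation.

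The second step is to run the paragraphs ``Construction of the orbits $\widetilde\beta_j$'', ``Orbits freely homotopic to $\beta_0$'', and the ensuing Conclusion in the proof of Theorem \ref{main} verbatim; these are general facts about skewed ${\bf R}$-covered Anosov flows with transversely orientable stable foliation. They give: the free homotopy class of any closed orbit $\beta$ equals $\{\beta_i:i\in{\bf Z}\}$, and this is finite if and only if $\beta_i=\beta_j$ for some $i\neq j$, in which case a power of the resulting free homotopy is a $\pi_1$-injective torus, hence by the torus theorem \cite{Ja,Ja-Sh} homotopic into a Seifert piece of $M_s$ and thus into $N_2$; equivalently the corresponding orbit of $\Phi_1$ is homotopic into $N_2$, i.e. $\beta'$ is disjoint from $\alpha$. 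This gives ii) and the finiteness half of i). For the count in i), assume $\beta'$ misses $\alpha$; push $\beta'$ into a non-annular component of $S_2$ and consider the free homotopy of the corresponding orbit of $\Phi_1$ to itself obtained by rotating the tangent field along the geodesic by a full angle. In the unrolled cover this homotopy closes up only after a rotation by $2\pi n$ and passes through precisely the $2n$ orbits of $\Phi_1$ in that free homotopy class; being supported in $N_2$ it survives the surgery and exhibits $2n$ distinct closed orbits of $\Phi_s$ freely homotopic to $\beta$. Conversely, if $\delta$ is freely homotopic to $\beta$ then by the dichotomy above $\delta$ has finite free homotopy class, so by ii) its geodesic also misses $\alpha$, and $\beta,\delta$ may be taken in a common Seifert piece of $N_2$; realize the homotopy by an essential annulus $A$, put it in general position with $\partial M_3$, delete the null-homotopic intersection circles using irreducibility, homotope the surviving annuli in $M_3$ into $\partial M_3$ since $M_3$ is acylindrical (being finite-volume hyperbolic), and so push $A$ entirely off $M_3$ into $N_2$, where it survives undoing the surgery and yields a free homotopy between the corresponding orbits of $\Phi_1$. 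As every free homotopy class of $\Phi_1$ has exactly $2n$ elements, the class of $\beta$ has at most, hence exactly, $2n$ elements. Both cases occur: infinitely many geodesics of $S$ meet $\alpha$ (class ii)), and the non-annular component of $S_2$ carries infinitely many geodesics (class i)).

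The main obstacle I anticipate is the first step: faithfully transcribing the proof of Proposition \ref{ator} to the cover $M_1$, especially Case 2 (a torus compressible in $M_1$), since $N_1$ is now only a circle bundle over $S_1$ and one lacks the tidy projection to a surface for curves that leave $N_1$; one has to confirm that the inputs actually used — unknottedness of lifts from Proposition \ref{unknot} and indivisibility of $\gamma$ — are exactly what the original argument needs. A secondary but nontrivial point is the bookkeeping showing that the rotation-by-$2\pi n$ homotopy realizes precisely the full $2n$-element free homotopy class of $\Phi_1$ and is contained in $N_2$ once the geodesic is isotoped into $S_2$.
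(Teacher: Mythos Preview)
Your proposal is correct and follows exactly the approach the paper indicates: the paper does not give a separate proof of this theorem but simply states that ``essentially the same proof as the Main theorem yields the following result,'' together with the preliminary remark that in the $n$-fold unrolled cover every free homotopy class of $\Phi_1$ has cardinality $2n$. You have faithfully transcribed the Main theorem's argument to the cover, correctly flagged the two points that require care (rerunning Proposition~\ref{ator} in $N_1$ rather than appealing to a nonexistent covering of complements, and the $2n$ count via the $2\pi n$ rotation supported in $N_2$), and used the same acylindricality/torus-theorem endgame; there is nothing to add.
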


\vskip .1in
\noindent
{\bf {2) Dehn surgery on more than one closed orbit}}

In essentially the same way as in the proof of the Main theorem
We obtain a result similar to the Main theorem under Dehn surgery on finitely many closed orbits
as follows.

\begin{theorem}{}{}
Let $S$ be a closed orientable surface and $M = T_1 S$ with geodesic flow $\Phi$.
Let \ $\{ \alpha_i, \ \ 1 \leq i \leq i_0 \}$ be a finite collection of disjoint closed geodesics
in $S$ which are pairwise 
disjoint and some component of the complement of their union is not an annulus.
Let $\gamma_i$ be a closed orbit of $\Phi$ which projects to $\alpha_i$ in $S$.
For each $i$ do $(1,n_i)$  Fried's flow  Dehn surgery on $\gamma_i$ to yield an
Anosov flow $\Phi_s$ in the Dehn surgery manifold $M_s$ and so that the surgery
satisfies the positivity condition. Then $\Phi_1$ is an 
$\rrrr$-covered Anosov flow. There is a bijection between orbits of $\Phi_1$ and
orbits of $\Phi$. Given an orbit $\beta$ of $\Phi_s$ consider the orbit of $\Phi$
associated to it and project it to a closed geodesic $\beta'$ in $S$. Then
the following happens.
\ i) If $\beta'$ is disjoint from the union of the $\{ \alpha_i \}$ then $\beta$ is freely
homotopic to a single other closed orbit of $\Phi_s$. \ ii) If $\beta'$ intersects
the union of $\{ \alpha_i \}$ then $\beta$ is freely homotopic to infinitely
many other closed orbits of $\Phi_s$.
\end{theorem}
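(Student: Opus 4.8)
The plan is to reduce everything to the one-orbit situation of Theorems \ref{main} and \ref{rcov}, carrying along the extra bookkeeping caused by having several disjoint geodesics.

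First fix the subsurface data. Let $S_1$ be the (possibly disconnected) subsurface of $S$ filled by $\bigcup \alpha_i$, let $S_2$ be the closure of $S - S_1$, and recall that by hypothesis some component $S'$ of $S_2$ is not an annulus. Put $M_i = T_1 S_i$. The surgery is performed inside small solid-torus neighbourhoods of the $\gamma_i \subset M_1$, so $M_2$ is unaffected and survives in $M_s$; let $M_3$ be the closure of $M_s - M_2$. The first real step is the analogue of Proposition \ref{ator}: \emph{$M_1 - \bigcup \gamma_i$ is homotopically atoroidal}, with the proof given for Proposition \ref{ator}. The manifold is irreducible and is not a small Seifert space, so by Gabai \cite{Ga} it suffices to check geometric atoroidality; let $T$ be an incompressible torus, regarded inside $M = T_1 S$. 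If $T$ is $\pi_1$-injective in $M$, then by Brittenham's theorem \cite{Br}, there being no horizontal tori, $T$ is isotopic to a vertical torus, projecting to a simple closed curve disjoint from all the $\alpha_i$; since $\bigcup \alpha_i$ fills $S_1$ this curve is peripheral in $S_1$, so $T$ is peripheral in $M_1 - \bigcup \gamma_i$. If $T$ is compressible in $M$, a compressing disk must meet $\bigcup \gamma_i$, and the ball/solid-torus analysis of Proposition \ref{ator} applies: in the relevant case one produces a solid torus $V$ with $\partial V = T$ and a lift $\widetilde V$ containing a lift of some $\gamma_i$; Proposition \ref{unknot} shows this lift is isotopic to the core of $\widetilde V$, and the indivisibility of each $\alpha_i$ in $\pi_1(S)$ excludes the remaining possibilities (were $V$ to meet two distinct $\gamma_i,\gamma_j$, one would get distinct closed geodesics freely homotopic to powers of a common curve, which is impossible). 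Hence $T$ is again peripheral. By the geometrization theorem in the Haken case \cite{Th1,Th2}, $M_1 - \bigcup \gamma_i$ is hyperbolic of finite volume, so by the hyperbolic Dehn surgery theorem each component of $M_3$ is hyperbolic once the $n_i$ are large; the torus decomposition of $M_s$ therefore consists of the hyperbolic pieces $M_3$ and the Seifert pieces $T_1$ of the non-annular components of $S_2$, one of which, $T_1 S'$, is present by hypothesis.

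Next, $\Phi_s$ is a skewed $\rrrr$-covered Anosov flow: performing the surgeries one orbit at a time and applying Theorem \ref{rcov} at each stage (with the positivity condition arranged) gives $\rrrr$-covered, and $\Phi_s$ cannot be of product type, since then $M_s$ would fibre over $S^1$ with torus fibre, contradicting the presence of a hyperbolic piece in its torus decomposition. Now the construction of the orbits $\widetilde \beta_i$, $i \in {\bf Z}$, together with the discussion ``Orbits freely homotopic to $\beta_0$'' from the proof of Theorem \ref{main}, goes through verbatim: the free homotopy class of $\beta_0$ is finite iff the collection $\{\beta_i\}$ is finite iff some covering translation $f$ sends $\widetilde\beta_0$ to $\widetilde\beta_k$ with $k \neq 0$, which (after squaring if needed) yields a $\pi_1$-injective torus through $\beta_0$; by the torus theorem \cite{Ja,Ja-Sh} this torus is homotopic into a Seifert piece, hence into $M_2$, so the associated orbit $\beta_0'$ of $\Phi$ is homotopic into $M_2$ and $\tau(\beta_0')$ is disjoint from $\bigcup \alpha_i$. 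The contrapositive is exactly part ii), and it also proves the finiteness half of part i). (That both sets of orbits in i) and ii) are infinite is immediate: infinitely many closed geodesics of $S$ meet $\bigcup \alpha_i$, and, since $S'$ is not an annulus, infinitely many closed geodesics are homotopic into $S'$.)

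Finally I would establish ``exactly two'' in part i). If $\tau(\beta_0')$ is disjoint from all the $\alpha_i$ it is homotopic into $S_2$, and the angle-turning free homotopy from $\beta_0'$ to its reverse takes place inside $M_2$, hence survives in $M_s$; thus the class of $\beta_0$ has at least two elements, and by the previous paragraph it is finite. Let $\delta$ be any orbit freely homotopic to $\beta_0$; isotoping $M_2$ we may assume $\beta_0,\delta \subset M_2$, and we realize the free homotopy by a $\pi_1$-injective immersed annulus $A$ in general position with respect to $T = \partial M_3$. Intersection curves null-homotopic in $T$ are removed using irreducibility of $M_s$, and then each piece of $A \cap M_3$ is a $\pi_1$-injective properly immersed annulus in the finite-volume hyperbolic manifold $M_3$; such a manifold contains no essential annulus (a ${\bf Z}$ subgroup is parabolic and lies in a maximal ${\bf Z}^2$ cusp subgroup, which forces both ends of the annulus into a single cusp and makes the annulus boundary-parallel), so $A$ can be pushed off $M_3$ into $M_2$. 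The free homotopy then survives undoing the surgery, giving a free homotopy in $T_1 S$ between $\beta_0'$ and $\delta'$; as the free homotopy classes of the geodesic flow have exactly two elements, $\delta$ is uniquely determined. I expect the main obstacle to be the atoroidality step when $S_1$ is disconnected and $M_3$ has several boundary tori: one must check that the compressible-torus analysis localizes correctly near a single $\gamma_i$, and that the ``no essential annulus'' property genuinely holds for each finite-volume hyperbolic piece with more than one cusp.
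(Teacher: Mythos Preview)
Your proposal is correct and follows exactly the approach the paper intends. The paper does not supply a separate proof for this generalization; it only states that the result is obtained ``in essentially the same way as in the proof of the Main theorem,'' and your sketch faithfully carries out that reduction, including the additional bookkeeping (possibly disconnected $S_1$, several $\gamma_i$, several boundary tori on $M_3$) that the paper leaves implicit. The concerns you flag at the end are not genuine obstacles: the compressible--torus case of Proposition~\ref{ator} does localize to a single $\gamma_i$ by the indivisibility argument you give, and finite--volume hyperbolic $3$--manifolds are acylindrical regardless of the number of cusps, for the reason you indicate (a peripheral ${\bf Z}$ is parabolic and sits in a unique maximal ${\bf Z}^2$, so any properly immersed $\pi_1$--injective annulus is boundary parallel).
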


One can also combine the two constructions above.

{\footnotesize
{
\setlength{\baselineskip}{0.01cm}

\noindent
Florida State University

\noindent
Tallahassee, FL 32306-4510

}
}

\end{document}